\documentclass{article}

\usepackage{arxiv}

\usepackage[utf8]{inputenc} 
\usepackage[T1]{fontenc}    
\usepackage{hyperref}       
\usepackage{url}            
\usepackage{booktabs}       
\usepackage{amsfonts}       
\usepackage{nicefrac}       
\usepackage{microtype}      
\usepackage{lipsum}		
\usepackage{graphicx}
\usepackage{mathtools}
\usepackage{amssymb}
\usepackage{amsthm}
\usepackage{amsmath}
\usepackage{doi}

\usepackage{cases}
\usepackage{microtype}
\usepackage[utf8]{inputenx}
\usepackage{amssymb}
\usepackage{amsthm}
\usepackage{mathtools, cases}
\usepackage{mathbbol}
\usepackage{makecell}
\usepackage{dsfont}
\usepackage{bbm}
\usepackage{mathabx}
\allowdisplaybreaks

\renewcommand{\sinh}{\operatorname{sinh}}

\newtheorem{thm}{Theorem}

\title{Dirichlet problems and exit distributions for the telegraph process and its planar extensions}

\author{ 
	\href{https://orcid.org/0000-0002-6163-044X}{Manfred Marvin Marchione}\\
	Independent Researcher\\
	\texttt{manfredmarvin.marchione@gmail.com}
	\And
	\href{https://orcid.org/0000-0002-6421-533X}{Enzo Orsingher} \\
	Department of Statistical Sciences\\
	Sapienza University of Rome\\
	\texttt{enzo.orsingher@uniroma1.it}
}

\date{\today}

\renewcommand{\headeright}{}
\renewcommand{\undertitle}{}

\begin{document}
\maketitle

\begin{abstract}
In this paper, we study boundary-value problems describing the exit distribution of finite-velocity random motions from prescribed domains. For the standard telegraph process, with and without drift, we derive the Dirichlet problems governing the exit point and mean exit time from a closed interval. We then extend the analysis to a planar finite-velocity model with orthogonal directions, for which we obtain the associated Laplace and Poisson-type equations for the exit distribution and mean exit time. In the special case of an infinite strip, explicit solutions are obtained. In all cases, we show that our equations and results converge, in the hydrodynamic limit, to the corresponding ones for Brownian motion.
\end{abstract}

\keywords{Telegraph process, Dirichlet problem, Systems of differential equations}

\section{Introduction}
Since the classical works by Goldstein \cite{goldstein1951} and Kac \cite{kac1974stochastic}, the telegraph process and its multivariate extensions have been extensively studied in the literature. The exact distribution in terms of modified Bessel functions was obtained by Orsingher \cite{orsingher1990}, and the extension to the motion with drift was investigated by Cane \cite{cane1975} and Beghin et al. \cite{beghinnieddu}. In the planar setting, finite-velocity random motions are typically classified according to the number of admissible directions: minimal-direction models were studied by Di Crescenzo \cite{dicrminimal}, while orthogonal models were introduced by Orsingher \cite{orsingher2000exact} and further generalized by Orsingher and Marchione \cite{vortex}. A fundamental property of the telegraph process is that, in the hydrodynamic limit where both the velocity and the switching rate diverge under a suitable scaling, its distribution converges to that of the standard Brownian motion. This holds both for the classical one-dimensional telegraph process (Orsingher \cite{orsingher1990}) and for its planar extensions (see Orsingher and Marchione \cite{vortex}). In view of this result, the telegraph process is often regarded as a finite-velocity counterpart of Brownian motion. Inspired by this crucial intuition, several works in the literature have focused on studying properties of the paths of the telegraph process that mirror the well-known properties of the Brownian motion. Bogachev and Ratanov \cite{bogachev} proved that, similarly to Brownian motion, a Feynman-Kac formula holds for the telegraph process, and they employed it to study the distribution of sojourn times. Di Crescenzo and Pellerey \cite{dicrpellerey} introduced financial models based on the telegraph process, viewed as a finite-velocity analogue of the Brownian motion underlying the Black–Scholes model. The model was further developed by Ratanov \cite{Ratanov}, who added a jump component to the telegraph process in order to rule out arbitrage opportunities. The distribution of first passage times of the telegraph process was investigated by Foong and Kanno \cite{foong}. Cinque and Orsingher \cite{cinquemax} proved that the classical reflection principle for Brownian motion can be extended to the symmetric telegraph process, and subsequently obtained the exact distribution of the maximum of the telegraph process. The distribution of the maximum for the asymmetric telegraph process was later investigated by  Cinque and Orsingher \cite{cinqueasymm}. Pedicone and Orsingher \cite{pedicone} recently obtained the distribution of the telegraphic meander and showed that it converges, in the hydrodynamic limit, to the distribution of the Brownian meander.\\
There exists, however, a fundamental class of problems related to Brownian motion which, to the best of our knowledge, has not yet been extended to finite-velocity random motions in the existing literature. This class of problems concerns the probabilistic interpretation of the Dirichlet problem in terms of the exit of a telegraph process from a given set. In the case of Brownian motion, the probabilistic interpretation of the Dirichlet problem is classical and has been extensively studied within the probabilistic potential theory, with standard references including the monographs by Doob \cite{doob1984classical} and Port and Stone \cite{port2012brownian}. For a probabilistic account of the topic, we refer to the book by Karatzas and Shreve \cite{karatzas2014brownian}. Let $D\subset\mathbb{R}^d$, with $d\ge1$, be an open set and $\{B(t)\}_{t\ge0}$ a $d$-dimensional standard Brownian motion with initial position $x\in D$. Denote by $\tau$ the first exit time of $B(t)$ from $D$, i.e. $\tau=\inf\{t>0:\,B(t)\notin D\}$. For two given functions $f$ and $g$, define the function $u(x)=\mathbb{E}\left[f(B(\tau))+\int_0^\tau g(B(s))\mathrm{d}s\,\lvert B(0)=x\right]$. Under suitable regularity conditions on the functions $f,g$ and the set $D$, the function $u$ is a solution to the boundary-value problem
\begin{equation}\label{intro:DMdirich}
\begin{cases}
\Delta u= -2g\qquad &\mathrm{in}\;D\\
u = f\qquad &\mathrm{on}\;\partial D
\end{cases}
\end{equation}
where $\Delta$ represents the $d$-dimensional Laplacian. Two particular cases of the problem (\ref{intro:DMdirich}) are of special interest from the probabilistic point of view. If $f=0$ and $g=1$, the solution $u$ represents of mean exit time of $B(t)$ from $D$. If $g=0$ and $f(x)=\delta(x-z)$ where $z\in\partial D$ and $\delta(\cdot)$ is the Dirac delta function, then solving (\ref{intro:DMdirich}) yields the distribution of the exit point of $B(t)$ from $D$. In these cases, elegant explicit solutions to the Dirichlet problem (\ref{intro:DMdirich}) are known in the literature for several choices of $D$. These comprise the disk and the ball, for which the classical Poisson kernel arises, as well as the circular annulus and spherical shell, half-planes, infinite strips, and various other domains.\\
The aim of the present paper is to investigate the exit distribution of finite-velocity random motions from a prescribed domain. We focus on both the distribution of the exit point and the mean exit time for the telegraph process with and without drift, as well as its planar extension with orthogonal directions. Our results are obtained by solving suitable linear systems of differential equations with appropriate boundary conditions. In other words, we construct Dirichlet problems whose solutions describe the exit distribution of random motions with finite velocity. Moreover, we obtain the finite-velocity counterparts of the classical Laplace and Poisson equations associated with Brownian motion. Finally, we show that, under the usual hydrodynamic scaling limit, the presented results converge to their classical Brownian motion analogues.\\
We start our analysis by studying the exit distribution of a standard one-dimensional telegraph process from an interval. Let $\{X(t)\}_{t\ge0}$ be a telegraph process with initial position $x\in\mathbb{R}$ and constant velocity $c>0$. At the initial time $t=0$, the process starts moving rightwards or leftwards, each with probability $\frac{1}{2}$, and changes direction according to a homogeneous Poisson process with intensity $\lambda>0$. We denote by $D(t)$ the direction of $X(t)$ at time $t$, and we say that $D(t)=d_0$ when $X(t)$ is moving rightwards, while $D(t)=d_1$ when $X(t)$ is moving leftwards. Consider a closed interval $[a,b]$ such that $x\in[a,b]$, and let $\tau$ be the first exit time of the telegraph process from $[a,b]$, i.e.
$$\tau=\inf\{t>0:\;X(t)\notin[a,b]\}.$$
We study the probability of $X(t)$ exiting the interval $[a,b]$ from the upper endpoint. By conditioning with respect to the initial direction of $X(t)$, we consider, for $j=0,1$, the functions
\begin{equation}u_j(x)=\mathbb{P}\Big(X(\tau)=b\;\Big\lvert X(0)=x,\; D(0)=d_j\Big),\qquad x\in[a,b].\label{introd:uj}\end{equation}
By removing the conditioning with respect to the initial direction $D(0)$ we define
\begin{equation*}u(x)=\mathbb{P}\Big(X(\tau)=b\;\Big\lvert X(0)=x\Big)=\frac{u_0(x)+u_1(x)}{2},\qquad x\in[a,b].\end{equation*}
We prove that, in the open interval $(a,b)$, the functions (\ref{introd:uj}) are a solution to the linear system of differential equations
\begin{equation}\label{introd:ujdsys}
\begin{cases}
\dfrac{\mathrm{d} u_0}{\mathrm{d}x}=-\dfrac{\lambda}{c}\big[u_1-u_0\big]\\[0.75em]
\dfrac{\mathrm{d} u_1}{\mathrm{d}x}=-\dfrac{\lambda}{c}\big[u_1-u_0\big].
\end{cases}
\end{equation}
Moreover, in view of the probabilistic interpretation of the problem, we show that the system (\ref{introd:ujdsys}) is subject to the boundary conditions
\begin{equation}\label{introd:ujbound}\begin{cases}u_0(b)=1\\u_1(a)=0.\end{cases}\end{equation}
To understand the conditions (\ref{introd:ujbound}), it is sufficient to observe that, if the process $X(t)$ has initial position $x=b$ and starts moving rightwards, it immediately exits the interval through $b$ with probability 1. In contrast, if the initial position is $x=a$ and the process starts moving leftwards, then $X(t)$ immediately exits the interval $[a,b]$ through the lower endpoint $a$, and hence cannot exit through $b$. The system (\ref{introd:ujdsys}) together with the boundary conditions (\ref{introd:ujbound}) forms a Dirichlet problem connected with the exit distribution of the telegraph process. By solving this problem, we obtain the distribution of the exit point of $X(t)$ from $[a,b]$, both conditional and unconditional on the initial direction of the process. In particular, in the unconditional case, we prove that 
\begin{equation}\mathbb{P}\Big(X(\tau)=b\;\Big\lvert X(0)=x\Big)=\frac{1}{2}\,\frac{c+2\lambda\,(x-a)}{c+\lambda\,(b-a)}.\label{introd:u_totsol}\end{equation}
We remark that, in the hydrodynamic limit for $\lambda,c\to+\infty$ under the usual scaling $\frac{\lambda}{c^2}\to1$, formula (\ref{introd:u_totsol}) becomes
\begin{equation}\lim_{\substack{\lambda,c\to+\infty\\\lambda/c^2\to1}}\mathbb{P}\Big(X(\tau)=b\;\Big\lvert X(0)=x\Big)=\frac{x-a}{b-a}\label{introd:u_totsollim}\end{equation}
which coincides with the well-known distribution of the exit point from $[a,b]$ of a univariate Brownian motion having initial position $B(0)=x$. This is consistent with the fact that the telegraph process converges in distribution to the Brownian motion in the hydrodynamic limit. We also emphasize that, in view on the system (\ref{introd:ujdsys}), it can be verified that the function $u(x)$ satisfies the differential equation $\Delta u = 0$, where $\Delta$ represents the one-dimensional Laplacian. Hence, the one-dimensional Laplace equation is satisfied both by the exit distribution (\ref{introd:u_totsol}) of the telegraph process and the analogue distribution (\ref{introd:u_totsollim}) for Brownian motion. However, observe that the comparison between formulas (\ref{introd:u_totsol}) and (\ref{introd:u_totsollim}) highlights a fundamental difference between the telegraph process and Brownian motion. In particular, if the telegraph process starts from one endpoint of the interval $[a,b]$ and has initial direction towards the opposite endpoint, then there is a strictly positive probability that it exits the interval for the first time through the opposite endpoint. In the case of Brownian motion, if the process is started at an endpoint of the interval, the it must exit the interval immediately through that endpoint, as a consequence of the oscillating behaviour of Brownian motion.\\
We continue our analysis by studying the mean exit time of the telegraph process $X(t)$ from the interval $[a,b]$. Thus, by conditioning with respect to the initial direction, we consider, for $j=0,1$, the functions
\begin{equation}h_j(x)=\mathbb{E}\left[\tau\,\Big\lvert X(0)=x,\,D(0)=d_j\right],\qquad x\in[a,b].\label{introd:hjdef}\end{equation} and, unconditional with respect to the initial direction,
\begin{equation*}h(x)=\mathbb{E}\left[\tau\,\Big\lvert X(0)=x\right]=\frac{h_0(x)+h_1(x)}{2},\qquad x\in[a,b].\end{equation*}
We prove that, for $x\in(a,b)$, the conditional mean exit times (\ref{introd:hjdef}) are a solution to the inhomogeneous system of differential equations
\begin{equation}\label{introd:hjdsys}
\begin{cases}
\dfrac{\mathrm{d} h_0}{\mathrm{d}x}=-\dfrac{\lambda}{c}\big[h_1-h_0\big]-\dfrac{1}{c}\\[0.75em]
\dfrac{\mathrm{d} h_1}{\mathrm{d}x}=-\dfrac{\lambda}{c}\big[h_1-h_0\big]+\dfrac{1}{c}.
\end{cases}
\end{equation}
subject to the boundary conditions
\begin{equation}\label{introd:hboundcond}
\begin{cases}
h_0(b)=0\\
h_1(a)=0.
\end{cases}
\end{equation} 
The interpretation of the conditions (\ref{introd:hboundcond}) is that, if the telegraph process has initial position $x=b$ and initial direction $d_0$ or it starts at $x=a$ with direction $d_1$, then it exits the interval immediately, which implies that the first exit time is 0 with probability 1. By solving the Dirichlet problem given by the system (\ref{introd:hjdsys}) with boundary conditions (\ref{introd:hboundcond}), we are able to obtain the mean exit time of $X(t)$ from $[a,b]$ conditional and unconditional with respect to the initial direction. In particular, in the unconditional case, we have that 
\begin{equation}\mathbb{E}\left[\tau\,\Big\lvert X(0)=x\right]=\frac{\lambda}{c^2}\,(b-x)(x-a)+\frac{b-a}{2c}.\label{introd:got3}\end{equation}
Observe that, in the hydrodynamic limit for $\lambda,c\to+\infty$ with $\frac{\lambda}{c^2}\to1$, formula (\ref{introd:got3}) reduces to
\begin{equation}\lim_{\substack{\lambda,c\to+\infty\\\lambda/c^2\to1}}\mathbb{E}\left[\tau\,\Big\lvert X(0)=x\right]=(b-x)(x-a)\label{introd:got3limit}\end{equation}
which coincides with the mean exit time of a one-dimensional Brownian motion with initial position $B(0)=x$. Moreover, in view of the system (\ref{introd:hjdsys}), it can be verified that the mean exit time (\ref{introd:got3}) is a solution to the equation $\Delta h=-\frac{2\lambda}{c^2}.$ This equation converges, in the hydrodynamic limit, to $\Delta h=-2$ which coincides with the Poisson equation governing the mean exit time for Brownian motion.
Again, we remark that formulas (\ref{introd:got3}) and (\ref{introd:got3limit}) highlight a crucial difference between the telegraph process and Brownian motion. If the telegraph process starts at one endpoint of the interval $[a,b]$ and initially moves towards the interior, then the exit time is strictly positive. Thus, the mean exit time is strictly positive even at the endpoints of $[a,b]$. This is not the case for Brownian motion, which exits the interval immediately if the initial position coincides with one of the endpoints.\\
Our next step in the analysis of the exit distribution of the telegraph process is to generalize the results presented so far to the case of a telegraph process with drift. By adopting the framework of Beghin et al. \cite{beghinnieddu}, we assume that the telegraph process $X(t)$ has velocity $c_0$ and switching rate $\lambda_0$ when it is moving rightwards, while it has velocity $c_1$ and switching rate $\lambda_1$ when moving leftwards. These assumptions produce a drift effect which arises both by the difference between the velocities and by the asymmetry in the switching rates. Even in the case of the telegraph process with drift, we prove that the exit point distribution and the mean exit time are solutions to suitable boundary-value problems which generalize the systems (\ref{introd:ujdsys}) and (\ref{introd:hjdsys}). By solving such problems, we are able to determine the exit distribution of the telegraph process with drift conditional and unconditional on the initial direction. In particular, in the unconditional case, we prove that the exit point distribution is
\begin{equation}\mathbb{P}\Big(X(\tau)=b\;\Big\lvert X(0)=x\Big)=\frac{1}{2}\;\frac{(\lambda_1c_0+\lambda_0c_1)\,e^{\left(\frac{\lambda_0}{c_0}-\frac{\lambda_1}{c_1}\right)x}-2\lambda_1c_0\,e^{\left(\frac{\lambda_0}{c_0}-\frac{\lambda_1}{c_1}\right)a}}{\lambda_0c_1\,e^{\left(\frac{\lambda_0}{c_0}-\frac{\lambda_1}{c_1}\right)b}-\lambda_1c_0\,e^{\left(\frac{\lambda_0}{c_0}-\frac{\lambda_1}{c_1}\right)a}}\label{introd:u_totsol_drift}\end{equation}
and the mean exit time has the rather involved form
\begin{align}\label{introd:got3drift}\mathbb{E}&\left[\tau\,\Big\lvert X(0)=x\right]=\frac{\lambda_0+\lambda_1}{\lambda_0c_1-\lambda_1c_0}\,(x-a)\nonumber\\
&-\frac{(\lambda_0+\lambda_1)(b-a)}{2(\lambda_0c_1-\lambda_1c_0)}\cdot\frac{(\lambda_0c_1+\lambda_1c_0)\,e^{\left(\frac{\lambda_0}{c_0}-\frac{\lambda_1}{c_1}\right)x}-2\lambda_1c_0\,e^{\left(\frac{\lambda_0}{c_0}-\frac{\lambda_1}{c_1}\right)a}}{\lambda_0c_1\,e^{\left(\frac{\lambda_0}{c_0}-\frac{\lambda_1}{c_1}\right)b}-\lambda_1c_0\,e^{\left(\frac{\lambda_0}{c_0}-\frac{\lambda_1}{c_1}\right)a}}\nonumber\\
&+\frac{c_0+c_1}{2(\lambda_0c_1-\lambda_1c_0)}\cdot\frac{\lambda_0c_1\,e^{\left(\frac{\lambda_0}{c_0}-\frac{\lambda_1}{c_1}\right)b}-(\lambda_0c_1+\lambda_1c_0)e^{\left(\frac{\lambda_0}{c_0}-\frac{\lambda_1}{c_1}\right)x}+\lambda_1c_0\,e^{\left(\frac{\lambda_0}{c_0}-\frac{\lambda_1}{c_1}\right)a}}{\lambda_0c_1e^{\left(\frac{\lambda_0}{c_0}-\frac{\lambda_1}{c_1}\right)b}-\lambda_1c_0e^{\left(\frac{\lambda_0}{c_0}-\frac{\lambda_1}{c_1}\right)a}}.
\end{align}
Moreover, as in the driftless case, we investigate the hydrodynamic limit of the above results. We show that, under a suitable scaling, formulas (\ref{introd:u_totsol_drift}) and (\ref{introd:got3drift}), together with their governing equations, converge to the classical counterparts for Brownian motion with drift.\\
In the final part of the paper, we investigate Dirichlet problems related to the exit distribution of a planar finite-velocity random motion. In particular, we consider the planar random motion with orthogonal directions proposed by Orsingher \cite{orsingher2000exact}. Let $\big(X(t),Y(t)\big)$ be a random motion with initial position $(x,y)\in\mathbb{R}^2$ and constant velocity $c>0$. We assume that the process can move along four orthogonal directions $d_j=\left(\cos\left(\frac{\pi j}{2}\right),\,\sin\left(\frac{\pi j}{2}\right)\right),\; j=0,1,2,3$. At the initial time $t=0$, the process starts moving along one of the four possible directions, each chosen with probability $\frac{1}{4}$, and changes direction  according to a Poisson process $N(t)$ with intensity $\lambda>0$. Each time a Poisson event occurs, the process can turn clockwise, from $d_j$ to $d_{j-1}$, or counterclockwise, from $d_j$ to $d_{j+1}$, each with probability $\frac{1}{2}$. For a given closed set $\Gamma\subset\mathbb{R}^2$ such that $(x,y)\in\Gamma$, we consider a point $\mathbf{z}\in\partial\Gamma$, and we investigate the probability density function of $\big(X(t),Y(t)\big)$ exiting $\Gamma$ through $\mathbf{z}$, that is 
\begin{equation}\label{introdu:bivu}u(x,y\mathord{;}\mathbf{z})\mathrm{d}z=\mathbb{P}\Big(\big(X(\tau),Y(\tau)\big)\in\mathrm{d}\mathbf{z}\Big\lvert\,X(0)=x,\,Y(0)=y\Big).\end{equation} where $\mathrm{d}z$ represents the arc length element of $\mathbf{z}$ along $\partial \Gamma$. By treating the probability density function (\ref{introdu:bivu}) as a function of the starting point $(x,y)$, we prove that it is a solution to the partial differential equation
\begin{equation}\label{introd:extendedlaplacian}\Delta u =\frac{c^2}{\lambda^2}\,\frac{\partial ^4u}{\partial x^2\,\partial y^2}.\end{equation}
Thus, equation (\ref{introd:extendedlaplacian}) represents the finite-velocity analogue of the classical Laplace equation governing the distribution of the exit point of a planar Brownian motion. Since the distribution of $\big(X(t),Y(t)\big)$ converges, in the hydrodynamic limit, to that of a bivariate Brownian motion, it is natural to observe that equation (\ref{introd:extendedlaplacian}) reduces in the same limit to the classical Laplace equation $\Delta u = 0$, where $\Delta$ denotes the bivariate Laplacian. In a similar spirit, we are also able to derive a finite-velocity analogue of the Poisson equation governing the mean exit time of $\big(X(t),Y(t)\big)$ from $\Gamma$. We remark that equation (\ref{introd:extendedlaplacian}) is independent of the geometrical shape of the set $\Gamma$. However, the probabilistic solution related to the exit point distribution does depend on the specific choice of the set $\Gamma$. To illustrate the practical approach for studying the distribution (\ref{introdu:bivu}), we consider the special case in which the set $\Gamma$ coincides with the infinite horizontal strip $\Gamma=\{(x,y)\in\mathbb{R}^2:\;0\le y\le L\}$ for fixed $L>0$. In this case, we are able to obtain the exact probability of the process $\big(X(t),Y(t)\big)$ exiting the strip through the lower boundary, which reads
\begin{equation}\label{introdu:p}\mathbb{P}\Big(Y(\tau)=0\;\Big\lvert X(0)=x,\,Y(0)=y\Big)=\frac{c+\lambda\,(L-y)}{\lambda L+2c},\qquad (x,y)\in\Gamma\end{equation}
and the mean exit time
\begin{equation}\label{introdu:h}\mathbb{E}\Big[\tau\;\Big\lvert\;X(0)=x,\,Y(0)=y\Big]=\frac{\lambda\;y\,(L-y)}{c^2}+\frac{2\lambda L +c}{2\lambda\,c},\qquad (x,y)\in\Gamma\end{equation}
We also derive formulas (\ref{introdu:p}) and (\ref{introdu:h}) conditional on the initial direction of the process $\big(X(t),Y(t)\big)$. As for the exact distribution (\ref{introdu:bivu}) of the exit point, we are able to obtain its Fourier transform with respect to the abscissa $x$ of the starting point, and thus we provide an integral representation for $u(x,y\mathord{;}\mathbf{z})$ as an inverse Fourier transform. Moreover, we perform several numerical experiments that provide a graphical representation of the exit-point distribution and confirm the consistency of the Fourier representation with formula (\ref{introdu:p}).\\
The paper is organized in the following manner. In section 2, we study the Dirichlet problems related to the exit point distribution and the mean exit time for a standard telegraph process in a closed interval. We investigate the limiting cases and we discuss the main differences in comparison with the classical results for Brownian motion. The generalization of these results to the case of a telegraph process with drift is presented in section 3. In section 4, we extend our analysis to the planar case. We determine the Poisson-type equation for the planar motion with orthogonal directions and we investigate the exit distribution of the process from an infinite strip.

\section{Exit point and time of a telegraph process from a closed interval}\label{sec:base}
Let $\{X(t)\}_{t\ge0}$ be a telegraph process with initial position $x\in\mathbb{R}$ and constant velocity $c>0$. At the initial time $t=0$, the random motion $X(t)$ starts moving either rightwards or leftwards with equal probability $\frac{1}{2}$. The direction of $X(t)$ at time $t$ is denoted by $D(t)$, and the right and left directions are denoted respectively by $d_0$ and $d_1$. Clearly, $D(t)\in\{d_0,d_1\}$. The process $X(t)$ is assumed to change direction at Poisson events. We denote by $N(t)$ the homogeneous Poisson process which governs the direction changes and by $\lambda>0$ its constant intensity.\\
\noindent In this section, we are interested in studying the point at which the process $X(t)$ exits from the closed interval $[a,b]$, with $a<b$, under the assumption that $x\in[a,b]$. To this end, we define the exit time $\tau$ as \begin{equation}\label{tau}\tau=\inf\{t>0: X(t)\notin [a,b]\}.\end{equation} Without loss of generality, we consider the probability of $X(t)$ exiting the interval $[a,b]$ from its upper endpoint $b$. Thus, for $j=0,1$, we consider the conditional probabilities
\begin{equation}\label{roccaf}u_j(x)=\mathbb{P}\Big(X(\tau)=b\;\Big\lvert X(0)=x,\; D(0)=d_j\Big),\qquad x\in[a,b].\end{equation}
Clearly, the conditioning with respect to the initial direction $D(0)$ can be removed by defining the function
\begin{equation}\label{u}u(x)=\mathbb{P}\Big(X(\tau)=b\;\Big\lvert X(0)=x\Big)=\frac{u_0(x)+u_1(x)}{2},\qquad x\in[a,b].\end{equation}
The starting point of our analysis is to analyze the behaviour of the telegraph process in a small time interval of amplitude $\Delta t$ to study how the probabilities (\ref{roccaf}) evolve over such a time span. In particular, by analyzing the behaviour of $X(t)$ in the time interval $[0,\,\Delta t]$, it can be verified that the functions $u_j(x),\;j=0,1$, satisfy, for $x\in(a,b)$, the following system of equations:
\begin{equation}\label{ujsys}
\begin{cases}
u_0(x)=u_0(x+c\,\Delta t)\,(1-\lambda\,\Delta t)+u_1(x)\,\lambda\,\Delta t + o(\Delta t)\\
u_1(x)=u_1(x-c\,\Delta t)\,(1-\lambda\,\Delta t)+u_0(x)\,\lambda\,\Delta t + o(\Delta t).
\end{cases}
\end{equation}
To prove the first equation of formula (\ref{ujsys}), note that
\begin{align}u_0(x)=&\mathbb{P}\Big(X(\tau)=b\;\Big\lvert X(0)=x,\; D(0)=d_0\Big)\nonumber\\
=&\sum_{k=0}^{\infty}\mathbb{P}\Big(X(\tau)=b,\,N(\Delta t)=k\;\Big\lvert X(0)=x,\; D(0)=d_0\Big)\nonumber\\
=&\sum_{k=0}^{\infty}\mathbb{P}\Big(X(\tau)=b\;\Big\lvert X(0)=x,\; D(0)=d_0,\;N(\Delta t)=k\Big)\cdot \mathbb{P}\big(N(\Delta t)=k\big)\nonumber\\[0.2em]
=&\mathbb{P}\Big(X(\tau)=b\;\Big\lvert X(\Delta t)=x+c\Delta t,\; D(\Delta t)=d_0\Big)\,(1-\lambda\,\Delta t)\nonumber\\[0.2em]
&\qquad+\mathbb{P}\Big(X(\tau)=b\;\Big\lvert X(\Delta t)=x,\; D(\Delta t)=d_1\Big)\,\lambda\,\Delta t+o(\Delta t)\nonumber\\[0.3em]
=&u_0(x+c\,\Delta t)\,(1-\lambda\,\Delta t)+u_1(x)\,\lambda\,\Delta t + o(\Delta t).\label{markovstep}
\end{align}
\noindent Observe that, in the last step of formula (\ref{markovstep}), we have used the Markov property of the random vector $\big(X(t),D(t)\big)$. The second equation of formula (\ref{ujsys}) can be proved in a similar manner. By performing now a first-order Taylor expansion of the equations in formula (\ref{ujsys}), dividing by $\Delta t$ and taking the limit for $\Delta t\to0$, the following linear system of differential equations is obtained
\begin{equation}\label{ujdsys}
\begin{cases}
\dfrac{\mathrm{d} u_0}{\mathrm{d}x}=-\dfrac{\lambda}{c}\big[u_1-u_0\big]\\[0.75em]
\dfrac{\mathrm{d} u_1}{\mathrm{d}x}=-\dfrac{\lambda}{c}\big[u_1-u_0\big]
\end{cases}
\end{equation}
Moreover, by summing the equations of the system (\ref{ujdsys}), differentiating with respect to $x$ and taking into account formula (\ref{u}), it can be verified that the function $u(x)$ satisfies the ordinary differential equation
\begin{equation}\label{uode}\dfrac{\mathrm{d}^2 u}{\mathrm{d}x^2}=0.\end{equation} Equation (\ref{uode}) implies that the probability for the telegraph process $X(t)$ to exit the interval $[a,b]$ through a given endpoint is a linear function of the initial position $x$. This fundamental property is widely known to hold for the Brownian motion. In particular, let $\{B(t)\}_{t\ge0}$ be a Brownian motion starting at $x\in[a,b]$, and let $\tau$ the exit time of $B(t)$ from $[a,b]$, i.e. formula (\ref{tau}) with $B(t)$ in place of $X(t)$. Then
\begin{equation}\mathbb{P}\Big(B(\tau)=b\;\Big\lvert B(0)=x\Big)=\frac{x-a}{b-a},\qquad x\in[a,b].\label{BMupr}\end{equation}
For details, we refer to the book by Karatzas and Shreve \cite{karatzas2014brownian}. Equation (\ref{uode}) implies that, if the Brownian motion is replaced by a telegraph process, the distribution of the exit point preserves the linear dependence on the starting point. This fact is remarkable, since the telegraph process can be regarded as a finite-velocity counterpart of Brownian motion. Specifically, for $\lambda,c\to+\infty$ under the scaling $\frac{\lambda}{c^2}\to1$, the following limit in distribution holds:
\begin{equation}\lim_{\substack{\lambda,c\to+\infty\\\lambda/c^2\to1}} X(t)\overset{i.d.}{=}B(t).\label{XtoB}\end{equation} We refer to the paper by Orsingher \cite{orsingher1990} for a detailed proof of formula (\ref{XtoB}). However, although the exit-point distribution depends linearly on the starting point for both the telegraph process and Brownian motion, a substantial difference arises in the boundary conditions, as we shall discuss in the present section of this paper.\\
To determine the boundary conditions for the system (\ref{ujdsys}), we start by considering the case in which the starting point $x$ of $X(t)$ lies at the lower endpoint $a$ of $[a,b]$. If the process starts moving leftwards at time $t=0$, it exits the interval immediately through $a$. Hence, the probability that it leaves the interval through $b$ is zero. Conversely, if $x=b$ and the process starts moving rightwards, it exits instantly through $b$ with probability 1. Thus, the following boundary conditions must hold:
\begin{equation}\label{bcep}\begin{cases}u_0(b)=1\\u_1(a)=0.\end{cases}\end{equation}
This permits us to prove the following result.

\begin{thm}\label{thm1}For $x\in[a,b]$, it holds that 
\begin{equation}u_0(x)=\frac{c+\lambda\,(x-a)}{c+\lambda\,(b-a)}\label{u_0sol}\end{equation}
and
\begin{equation}u_1(x)=\frac{\lambda\,(x-a)}{c+\lambda\,(b-a)}.\label{u_1sol}\end{equation}
Consequently,
\begin{equation}u(x)=\frac{1}{2}\,\frac{c+2\lambda\,(x-a)}{c+\lambda\,(b-a)}.\label{u_totsol}\end{equation}
\end{thm}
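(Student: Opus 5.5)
We solve the Dirichlet problem formed by the system (\ref{ujdsys}) and the boundary conditions (\ref{bcep}). We then check that the solution is the probabilistic one, using continuity of $u_j$ up to the endpoints.

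First, the right-hand sides of the two equations in (\ref{ujdsys}) coincide. Hence $\frac{\mathrm{d}}{\mathrm{d}x}(u_0-u_1)=0$ on $(a,b)$, so $u_0-u_1=k$ for some constant $k$. Substituting back gives $\frac{\mathrm{d}u_0}{\mathrm{d}x}=\frac{\mathrm{d}u_1}{\mathrm{d}x}=\frac{\lambda}{c}\,k$. Therefore
\[
u_1(x)=A+\frac{\lambda k}{c}\,x,\qquad u_0(x)=A+k+\frac{\lambda k}{c}\,x,
\]
which is consistent with (\ref{uode}).

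Next we impose (\ref{bcep}). The condition $u_1(a)=0$ gives $A=-\lambda k a/c$, so
\[
u_1(x)=\frac{\lambda k}{c}(x-a),\qquad u_0(x)=k+\frac{\lambda k}{c}(x-a).
\]
The condition $u_0(b)=1$ gives $k\bigl(c+\lambda(b-a)\bigr)/c=1$, that is, $k=c/(c+\lambda(b-a))$. This yields (\ref{u_0sol}) and (\ref{u_1sol}). Averaging the two as in (\ref{u}) gives (\ref{u_totsol}).

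The main obstacle is justification rather than computation. The system (\ref{ujdsys}) was derived only on the open interval $(a,b)$, while the boundary conditions are evaluated at the endpoints. We therefore need $u_0$ to be continuous at $b$ from the left and $u_1$ to be continuous at $a$ from the right.

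Take $u_1$ near $a$. Starting at $a+\varepsilon$ with direction $d_1$, the process reaches $a$ within time $\varepsilon/c$ unless a Poisson event occurs first. That event has probability $1-e^{-\lambda\varepsilon/c}\to0$, so $u_1(a+\varepsilon)\to 0=u_1(a)$. The symmetric argument at $b$ gives $u_0(b-\varepsilon)\to 1=u_0(b)$.

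Differentiability of $u_j$ inside $(a,b)$, which the Taylor step behind (\ref{ujdsys}) takes for granted, can be avoided. For $x\in(a,b)$ we instead condition on the time $T$ of the first Poisson event. If $T$ occurs before the process reaches $b$, the position at that moment is $x+cT$ and the direction becomes $d_1$; otherwise the process exits through $b$. This gives the renewal identity
\[
u_0(x)=e^{-\lambda(b-x)/c}+\int_0^{(b-x)/c}\lambda e^{-\lambda s}\,u_1(x+cs)\,\mathrm{d}s,
\]
together with the analogous identity for $u_1$. Since the $u_j$ are bounded by 1, the integral representation shows they are continuous, and then differentiating it shows they are $C^1$. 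Differentiating the identities recovers (\ref{ujdsys}) rigorously.

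Finally, uniqueness is immediate. Every $C^1$ solution of (\ref{ujdsys}) lies in the two-parameter family found above, and the boundary conditions fix both parameters. So the probabilistic $u_j$ must equal the formulas in the statement.
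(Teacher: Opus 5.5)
Your proof is correct and follows the paper's route. The paper also solves (\ref{ujdsys}) under (\ref{bcep}), using that the coefficient matrix satisfies $A^2=0$, so that $e^{Ax}=I+Ax$; this encodes the same structure as your observation that $u_0-u_1$ is constant, and it gives the same affine family, with your $k$ equal to its $\kappa_0-\kappa_1$. Your first-jump renewal identities, endpoint continuity and uniqueness argument are sound, and they make rigorous what the paper obtains from a heuristic $\Delta t$ expansion. One small addition: those identities hold verbatim on all of $[a,b]$, so they also give continuity of $u_0$ at $a$ and of $u_1$ at $b$, and hence the formulas at every point of the closed interval.
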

\begin{proof}To prove formulas (\ref{u_0sol}) and (\ref{u_1sol}), we solve the linear system (\ref{ujdsys}) subject to the boundary conditions (\ref{bcep}). The system (\ref{ujdsys}) can be expressed in matrix form as
\begin{equation}\frac{\mathrm{d}}{\mathrm{d} x}\begin{pmatrix}u_0(x)\\u_1(x)\end{pmatrix}=A\cdot\begin{pmatrix}u_0(x)\\u_1(x)\end{pmatrix}\label{matrixformu}\end{equation} with $A=\begin{pmatrix}\frac{\lambda}{c} & -\frac{\lambda}{c}\\[0.5em]\frac{\lambda}{c} & -\frac{\lambda}{c}\end{pmatrix}.$
 Standard results on linear systems of ordinary differential equations ensure that the general solution to (\ref{matrixformu}) can be expressed in the form 
\begin{equation}\begin{pmatrix}u_0(x)\\u_1(x)\end{pmatrix}=e^{A\,x}\cdot\begin{pmatrix}\kappa_0\\\kappa_1\end{pmatrix}\label{ujgeneralsol}\end{equation} where the constants $\kappa_0,\kappa_1$ depend on the boundary conditions. We recall that the exponential of a square matrix $M$ is defined as $$e^M=I+\sum_{k=1}^{\infty}\frac{M^k}{k!}$$ where $I$ denotes the identity matrix with the same dimension as $M$. Observe now that the matrix $A$ is nilpotent, in the sense that $A^2$ coincides with the null matrix. Hence, it holds that $$e^{A\,x}=I+A\,x=\begin{pmatrix}1+\frac{\lambda x}{c} & -\frac{\lambda x}{c}\\[0.5em]\frac{\lambda x}{c} & 1-\frac{\lambda x}{c}\end{pmatrix}.$$ Therefore, formula (\ref{ujgeneralsol}) yields $$u_0(x)=\kappa_0+(\kappa_0-\kappa_1)\frac{\lambda x}{c},\qquad u_1(x)=\kappa_1+(\kappa_0-\kappa_1)\frac{\lambda x}{c}$$ By employing the boundary conditions (\ref{bcep}), it can be verified that $$\kappa_0=\frac{c-\lambda a}{c+\lambda(b-a)},\qquad \kappa_1=-\frac{\lambda a}{c+\lambda(b-a)}$$ which completes the proof of formulas (\ref{u_0sol}) and (\ref{u_1sol}). Formula (\ref{u_totsol}) follows immediately by using the relationship (\ref{u}).
\end{proof}
In Theorem \ref{thm1}, we have obtained the exact probability of the telegraph process $X(t)$ exiting the interval $[a,b]$ through the upper endpoint $b$ conditional and unconditional on the initial direction of the process. A graphical representation of these probabilities is provided in Figure \ref{fig:u_univ}. Of course, a slight modification of the arguments above permits obtaining the probability of exiting the interval through the lower endpoint $a$. As discussed above, these probabilities are linear functions of the initial position $x$, in analogy with the exit point distribution (\ref{BMupr}) of Brownian motion. However, Theorem \ref{thm1} highlights a crucial difference between the telegraph process and Brownian motion. In the case of the telegraph process, if the starting point $x$ coincides with one of the endpoints of the interval, that is if $x=a$ or $x=b$, the exit of the process from the interval $[a,b]$ may not occur immediately, and the process can exit the interval through the endpoint opposite to its initial position. For instance, assume that the process has initial position $x=a$. If the process starts moving rightwards, it has a positive probability of exiting the interval $[a,b]$ through the upper endpoint $b$. In particular, in view of formula (\ref{u_0sol}), it holds that 
\begin{equation}\mathbb{P}\Big(X(\tau)=b\,\Big\lvert\, X(0)=a,\;D(0)=d_0\Big)=\frac{c}{c+\lambda(b-a)}.\label{remarkab}\end{equation}
Formula (\ref{remarkab}) is remarkable, as it clearly shows how the probability of exiting the interval $[a,b]$ through a given endpoint, when starting from the opposite endpoint, depends on the velocity $c$, the switching intensity $\lambda$ and the width of the interval. This result would be difficult to obtain by direct computation, whereas the Dirichlet problem approach yields it in an elementary way. In contrast to the telegraph process, formula (\ref{BMupr}) shows that for Brownian motion, if the initial position coincides with one of the endpoints of the interval $[a,b]$, then the process exits the interval immediately. This behavior reflects the chaotic nature of Brownian motion, whose paths oscillate rapidly around the starting point. It is interesting to study the behaviour of the probabilities obtained in Theorem \ref{thm1} for large values of $c$ and $\lambda$. Consider the case in which $\lambda$ is fixed and $c\to+\infty$. Clearly, as the velocity $c$ increases, the process $X(t)$ tends to exit the interval $[a,b]$ in smaller time, and the probability of a direction change occurring before the process exiting the interval decreases. Consequently, the process exits the interval through the endpoint in the direction of its initial motion. If the process starts moving with direction $d_0$, it will exit the interval through $b$ in a short time. Similarly, if the initial direction is $d_1$ and the velocity $c$ is large, the process $X(t)$ will likely exit the interval through the lower endpoint $a$. Therefore, it is not surprising that the following limits hold:
$$\lim_{c\to+\infty}u_0(x)=1,\qquad \lim_{c\to+\infty}u_1(x)=0, \qquad \lim_{c\to+\infty}u(x)=\frac{1}{2}.$$
The case in which $c$ is fixed and $\lambda\to+\infty$ is more subtle. For large values of $\lambda$, the telegraph process changes direction with high frequency, which implies that the initial direction of $X(t)$ has little effect on the distribution of the exit point. This is consistent with the fact that
$$\lim_{\lambda\to+\infty}u_0(x)=\lim_{\lambda\to+\infty}u_1(x)=\lim_{\lambda\to+\infty}u(x)=\frac{x-a}{b-a}.$$ However, it must be taken into account that, if the direction changes with too high frequency, the process $X(t)$ is prevented to get too far from its starting position. Indeed, each time the process moves away from the initial position, it immediately changes direction, going back towards the starting point. Intuitively, for $\lambda\to+\infty$ with fixed $c$, the telegraph process $X(t)$ is trapped near its initial position. Later in this section, we shall prove that the mean exit time from the interval $[a,b]$ tends to infinity as $\lambda\to+\infty$.\\
We finally consider the case in which both $\lambda,c\to+\infty$ under the usual scaling $\frac{\lambda}{c^2}\to1$. In this case, it holds that
$$\lim_{\substack{\lambda,c\to+\infty\\\lambda/c^2\to1}}u_0(x)=\lim_{\substack{\lambda,c\to+\infty\\\lambda/c^2\to1}}u_1(x)=\lim_{\substack{\lambda,c\to+\infty\\\lambda/c^2\to1}}u(x)=\frac{x-a}{b-a}$$
which coincides with the exit probability (\ref{BMupr}) for a Brownian motion. This result is consistent with the fact that the telegraph process converges in distribution, in the hydrodynamic limit, to the standard Brownian motion.\\

\begin{figure}[h!]
\centering
\includegraphics[scale=0.24]{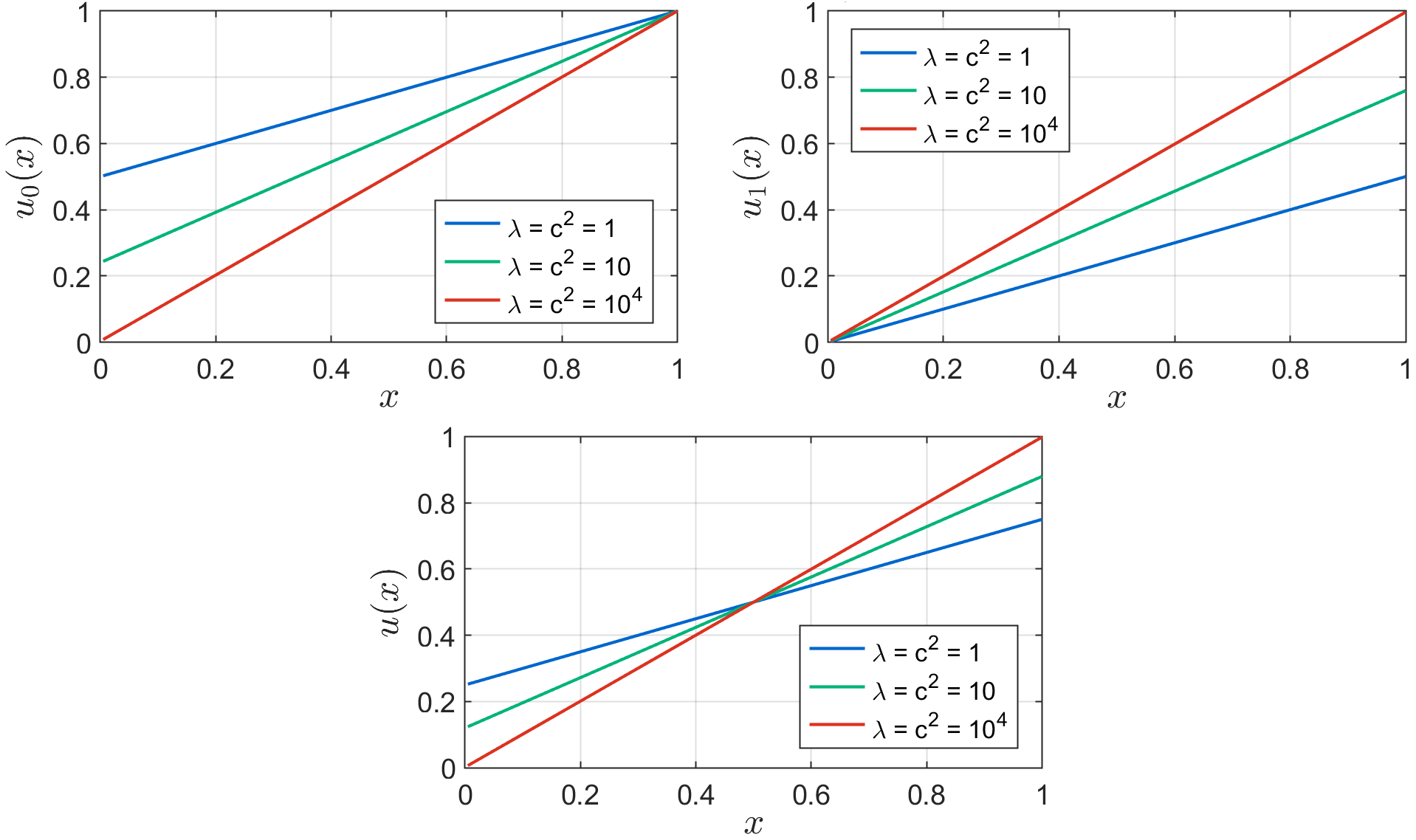}
\caption{Probability of the standard telegraph process exiting the interval $[0,1]$ through the upper endpoint as a function of the starting point $x$, for different values of the velocity $c$ and the switching intensity $\lambda$. The plots are obtained under the parametrization $\lambda=c^2$ for different values of $\lambda$, and they highlight that, when the process starts at an endpoint and initially moves toward the opposite endpoint, there is a positive probability that it exits the interval through the opposite endpoint. However, this probability approaches 0 as $\lambda$ increases. This is consistent with the asymptotic behaviour of the telegraph process, which converges to standard Brownian motion in the hydrodynamic limit.}
\label{fig:u_univ}
\end{figure}

We now continue our analysis by examining the expected exit time of the telegraph process $X(t)$ from the interval $[a,b]$. Consider, for $j=0,1,$ the functions \begin{equation}h_j(x)=\mathbb{E}\left[\tau\,\Big\lvert X(0)=x,\,D(0)=d_j\right],\qquad x\in[a,b].\label{hjdef}\end{equation} Similarly, by removing the conditioning with respect to the initial direction, we define the function
\begin{equation}h(x)=\mathbb{E}\left[\tau\,\Big\lvert X(0)=x\right]=\frac{h_0(x)+h_1(x)}{2},\qquad x\in[a,b].\label{hdef}\end{equation} We claim that, for sufficiently small $\Delta t$, the following system is satisfied for $x\in(a,b)$
\begin{equation}\label{hjsys}
\begin{cases}
h_0(x)=\Delta t+h_0(x+c\,\Delta t)\,(1-\lambda\,\Delta t)+h_1(x)\,\lambda\,\Delta t + o(\Delta t)\\
h_1(x)=\Delta t+h_1(x-c\,\Delta t)\,(1-\lambda\,\Delta t)+h_0(x)\,\lambda\,\Delta t + o(\Delta t).
\end{cases}
\end{equation}
The first equation in system (\ref{hjsys}) is obtained by observing that
\begin{align}
h_0(x)=&\mathbb{E}\left[\tau\,\Big\lvert X(0)=x,\,D(0)=d_0\right]\nonumber\\
=&\sum_{k=0}^{\infty}\mathbb{E}\left[\tau\,\mathds{1}\{N(\Delta t)=k\}\,\Big\lvert X(0)=x,\,D(0)=d_0\right]\nonumber\\
=&\sum_{k=0}^{\infty}\mathbb{E}\left[\tau\,\Big\lvert X(0)=x,\,D(0)=d_0,\,N(\Delta t)=k\right]\cdot \mathbb{P}\big(N(\Delta t)=k\big)\nonumber\\
=&\mathbb{E}\left[\tau\,\Big\lvert X(0)=x,\,D(0)=d_0,\,N(\Delta t)=0\right]\,(1-\lambda\,\Delta t)\nonumber\\
&\qquad+\mathbb{E}\left[\tau\,\Big\lvert X(0)=x,\,D(0)=d_0,\,N(\Delta t)=1\right]\,\lambda\,\Delta t+o(\Delta t)\nonumber\\
=&\mathbb{E}\left[\tau\,\Big\lvert X(\Delta t)=x+c\Delta t,\,D(\Delta t)=d_0\right]\,(1-\lambda\,\Delta t)\nonumber\\
&\qquad+\mathbb{E}\left[\tau\,\Big\lvert X(\Delta t)=x,\,D(\Delta t)=d_1\right]\,\lambda\,\Delta t+o(\Delta t).\label{stepdelic}
\end{align}
Consider now the time shifted process $Z(t)=X(t+\Delta t)$. It is immediate to verify that $Z(t)$ is a telegraph process with the same velocity $c$ and intensity of the direction changes $\lambda$ as the original process $X(t)$. We denote by $D_Z(t)=D(t+\Delta t)$ the direction of $Z(t)$ at time $t$. Moreover, let $\tau_Z$ be the first exit time of $Z(t)$ from the interval $[a,b]$, that is
\begin{equation}\label{tauZ}\tau_Z=\inf\{t>0: Z(t)\notin [a,b]\}.\end{equation}
By comparing formulas (\ref{tau}) and (\ref{tauZ}), and taking into account the definition of infimum of a set, it can be verified that
$$\tau=\tau_Z+\Delta t.$$
Therefore, formula (\ref{stepdelic}) can be expressed in the form
\begin{align}
h_0(x)=&\Delta t+\mathbb{E}\left[\tau_Z\,\Big\lvert Z(0)=x+c\Delta t,\,D_Z(0)=d_0\right]\,(1-\lambda\,\Delta t)\nonumber\\
&\qquad\qquad\qquad+\mathbb{E}\left[\tau_Z\,\Big\lvert Z(0)=x,\,D_Z(0)=d_1\right]\,\lambda\,\Delta t+o(\Delta t).\label{stepdelic2}
\end{align}
Since $Z(t)$ is a telegraph process, formula (\ref{stepdelic2}) coincides with the first equation of the system (\ref{hjsys}). The second equation of the system can be proved by means of a similar procedure. By performing now a first-order Taylor expansion of the equations in formula  (\ref{hjsys}), we obtain the following linear system of ordinary differential equations:
\begin{equation}\label{hjdsys}
\begin{cases}
\dfrac{\mathrm{d} h_0}{\mathrm{d}x}=-\dfrac{\lambda}{c}\big[h_1-h_0\big]-\dfrac{1}{c}\\[0.75em]
\dfrac{\mathrm{d} h_1}{\mathrm{d}x}=-\dfrac{\lambda}{c}\big[h_1-h_0\big]+\dfrac{1}{c}.
\end{cases}
\end{equation}
This implies that, by removing the conditioning with respect to the initial direction, the expected exit time is a solution to the ordinary differential equation
\begin{equation}\label{hode}\dfrac{\mathrm{d}^2 h}{\mathrm{d}x^2}=-\frac{2\lambda}{c^2}.\end{equation}
\noindent In order to solve equations (\ref{hjdsys}) and (\ref{hode}), we need to determine suitable boundary conditions. To this end, observe that, if the process $X(t)$ has initial value $X(0)=b$ and starts moving rightwards, it exits the interval $[a,b]$ immediately. Similarly, the expected exit time is zero if $X(0)=a$ and $D(0)=d_1$. Thus, the following boundary conditions hold:
\begin{equation}\label{hboundcond}
\begin{cases}
h_0(b)=0\\
h_1(a)=0.
\end{cases}
\end{equation} 
\noindent We are now able to prove the following remarkable result.
\begin{thm}\label{thm:fring}
For $x\in[a,b]$, it holds that
\begin{equation}\label{got1}h_0(x)=\frac{\lambda}{c^2}\,(b-x)(x-a)+\frac{b-x}{c}\end{equation}
and
\begin{equation}\label{got2}h_1(x)=\frac{\lambda}{c^2}\,(b-x)(x-a)+\frac{x-a}{c}\end{equation}
Consequently, \begin{equation}h(x)=\frac{\lambda}{c^2}\,(b-x)(x-a)+\frac{b-a}{2c}.\label{got3}\end{equation}
\end{thm}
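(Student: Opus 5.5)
The plan mirrors the proof of Theorem \ref{thm1}: we solve the inhomogeneous linear system (\ref{hjdsys}) subject to the boundary conditions (\ref{hboundcond}). In matrix form the system reads
\begin{equation*}
\frac{\mathrm{d}}{\mathrm{d}x}\begin{pmatrix}h_0\\h_1\end{pmatrix}=A\begin{pmatrix}h_0\\h_1\end{pmatrix}+\begin{pmatrix}-\frac{1}{c}\\ \frac{1}{c}\end{pmatrix},
\end{equation*}
where $A$ is the same nilpotent matrix as in (\ref{matrixformu}). Rather than using variation of constants with $e^{Ax}=I+Ax$, we decouple the system directly by passing to the difference and the sum of the two unknowns.

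\textbf{Step 1: the difference.} Subtracting the two equations of (\ref{hjdsys}) gives $\frac{\mathrm{d}}{\mathrm{d}x}(h_1-h_0)=\frac{2}{c}$. Hence $h_1-h_0=\frac{2x}{c}+\kappa_1$ for some constant $\kappa_1$.

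\textbf{Step 2: the sum.} Adding the two equations gives $\frac{\mathrm{d}}{\mathrm{d}x}(h_0+h_1)=-\frac{2\lambda}{c}(h_1-h_0)$. Substituting the result of Step 1, we obtain $h_0+h_1$ as a quadratic polynomial in $x$ with one further constant $\kappa_0$. This is consistent with (\ref{hode}). It follows that $h_0$ and $h_1$ are explicit quadratics in $x$ depending on the two constants $\kappa_0,\kappa_1$.

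\textbf{Step 3: fixing the constants.} The two boundary conditions $h_0(b)=0$ and $h_1(a)=0$ give a $2\times 2$ linear system in $\kappa_0,\kappa_1$. We solve it and rewrite the resulting quadratics in the factored form (\ref{got1})--(\ref{got2}).

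A cleaner alternative is to verify the claimed formulas directly, since the solution of the boundary value problem is unique (the general solution has exactly two free constants, and the boundary conditions determine them because the linear system has a nonzero determinant). For this check:
\begin{itemize}
\item The candidates satisfy $h_0(b)=0$ and $h_1(a)=0$ immediately.
\item Their difference is $h_1-h_0=\frac{2x-a-b}{c}$, whose derivative is $\frac{2}{c}$, as Step 1 requires.
\item Differentiating gives $h_0'=\frac{\lambda}{c^2}(a+b-2x)-\frac{1}{c}$, and indeed $-\frac{\lambda}{c}(h_1-h_0)-\frac{1}{c}=\frac{\lambda}{c^2}(a+b-2x)-\frac{1}{c}$.
\item The equation for $h_1$ is checked in the same way.
\end{itemize}
Formula (\ref{got3}) then follows by averaging via (\ref{hdef}), since $\frac{(b-x)+(x-a)}{2c}=\frac{b-a}{2c}$.

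The main obstacle is not the algebra but justifying uniqueness, namely that the conditions imposed at two different endpoints determine both constants. This amounts to checking that the determinant of the $2\times 2$ system for $(\kappa_0,\kappa_1)$ is nonzero; by analogy with Theorem \ref{thm1}, it should be proportional to $c+\lambda(b-a)>0$. We take the derivation of (\ref{hjdsys}), including finiteness of $h_j$, as given from the preceding discussion.
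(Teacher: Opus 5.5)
Your proposal is correct, and the checks in your verification route are accurate. The route differs from the paper's mainly in technique. The paper writes the general solution by variation of constants, $(h_0,h_1)^{T}=e^{A(x-a)}(\kappa_0,\kappa_1)^{T}+\int_a^x e^{A(x-y)}\,\mathrm{d}y\,(-\tfrac{1}{c},\tfrac{1}{c})^{T}$ with $e^{A(x-a)}=I+A(x-a)$, and then imposes $h_0(b)=0$, $h_1(a)=0$.

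Your sum and difference are coordinates in which the nilpotent $A$ becomes triangular. Because the two rows of $A$ coincide, $h_1-h_0$ sees only the forcing term, and $h_0+h_1$ is then driven by $h_1-h_0$. This avoids both the matrix exponential and the convolution integral. Your direct-verification variant is shorter still, and it makes explicit the uniqueness step that the paper uses only implicitly.

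That uniqueness step is less of an obstacle than you suggest. The paper anchors its constants at $x=a$, so $h_1(a)=0$ gives $\kappa_1=0$ at once. Then $h_0(b)=0$ reduces to $\kappa_0\bigl(1+\tfrac{\lambda(b-a)}{c}\bigr)=\tfrac{b-a}{c}\bigl(1+\tfrac{\lambda(b-a)}{c}\bigr)$, where nondegeneracy is visible.

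In your own parametrization, $h_1-h_0=\tfrac{2x}{c}+\kappa_1$ and $h_0+h_1=\kappa_0-\tfrac{2\lambda\kappa_1 x}{c}-\tfrac{2\lambda x^2}{c^2}$. The boundary conditions then read $\kappa_0-\bigl(1+\tfrac{2\lambda b}{c}\bigr)\kappa_1=\tfrac{2\lambda b^2}{c^2}+\tfrac{2b}{c}$ and $\kappa_0+\bigl(1-\tfrac{2\lambda a}{c}\bigr)\kappa_1=\tfrac{2\lambda a^2}{c^2}-\tfrac{2a}{c}$. Their determinant is $\tfrac{2}{c}\bigl(c+\lambda(b-a)\bigr)>0$, which confirms the claim you left hedged. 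Solving gives $\kappa_1=-\tfrac{a+b}{c}$, consistent with your $h_1-h_0=\tfrac{2x-a-b}{c}$.
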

\begin{proof}
We start by writing equation (\ref{hjdsys}) in matrix form
\begin{equation}\frac{\mathrm{d}}{\mathrm{d} x}\begin{pmatrix}h_0(x)\\[0.3em]h_1(x)\end{pmatrix}=A\cdot\begin{pmatrix}h_0(x)\\[0.3em]h_1(x)\end{pmatrix}+\begin{pmatrix}-\frac{1}{c}\\[0.3em]\;\frac{1}{c}\end{pmatrix}\label{matrixformh}\end{equation} with $A=\begin{pmatrix}\frac{\lambda}{c} & -\frac{\lambda}{c}\\[0.5em]\frac{\lambda}{c} & -\frac{\lambda}{c}\end{pmatrix}$. In order to solve the non-homogeneous system (\ref{matrixformh}) we use the method of variation of constants, which leads to the general solution
\begin{equation}\label{hgeneralsol}
\begin{pmatrix}h_0(x)\\[0.3em]h_1(x)\end{pmatrix}=e^{A\,(x-a)}\cdot\begin{pmatrix}\kappa_0\\[0.3em]\kappa_1\end{pmatrix}+\int_a^x e^{A\,(x-y)}\,\mathrm{d}y\cdot \begin{pmatrix}-\frac{1}{c}\\[0.3em]\;\frac{1}{c}\end{pmatrix}\end{equation} The matrix exponential of $A$ can be computed as in Theorem \ref{thm1}. Thus, formula (\ref{hgeneralsol}) produces the general solution 
$$h_0(x)=\kappa_0+(\lambda\kappa_0-\lambda\kappa_1-1)\,\frac{x-a}{c}-\lambda\left(\frac{x-a}{c}\right)^2$$
and
$$h_1(x)=\kappa_1+(\lambda\kappa_0-\lambda\kappa_1+1)\,\frac{x-a}{c}-\lambda\left(\frac{x-a}{c}\right)^2.$$
By employing the boundary conditions (\ref{hboundcond}) we obtain that
$$\kappa_0=\frac{b-a}{c},\qquad\qquad \kappa_1=0$$ which proves formulas (\ref{got1}) and (\ref{got2}). Formula (\ref{got3}) follows immediately.
\end{proof}
\noindent Having now obtained the explicit expressions of the mean exit time from an interval for the standard telegraph process, it is natural to compare it with the corresponding mean exit time for Brownian motion. While in both cases the mean exit time is a quadratic function of the starting point, Theorem \ref{thm:fring} highlights a crucial difference. In the case of Brownian motion, it is well known that the mean exit time has the form
\begin{equation}\label{zalon}\mathbb{E}\left[\tau\,\Big\lvert B(0)=x\right]=(b-x)(x-a),\qquad x\in[a,b].\end{equation}
Formula (\ref{zalon}) shows that, if a Brownian motion is started at one of the endpoints of the interval $[a,b]$, the mean exit time is 0, since the Brownian motion immediately exits the interval. This behaviour reflects the highly irregular nature of Brownian motion, as discussed above. In contrast, in Theorem \ref{thm:fring} we have proved that, if a telegraph process is started at an endpoint of the interval $[a,b]$, the mean exit time is strictly positive, since the process may initially move towards the opposite endpoint, thus delaying the exit time. We now discuss the limiting cases of the mean exit times presented in Theorem \ref{thm:fring}. We first consider the limit for $c\to+\infty$ with fixed $\lambda$. It is clear that, for high values of $c$, the telegraph process tends to exit the interval $[a,b]$ in a short time, since it reaches the endpoints shortly after the initial time due to its high propagation speed. Indeed, it can be verified that $$\lim_{c\to+\infty}h_0(x)=\lim_{c\to+\infty}h_1(x)=\lim_{c\to+\infty}h(x)=0.$$ As for the limit for $\lambda\to+\infty$ with fixed velocity $c$, the following interesting result holds:
$$\lim_{\lambda\to+\infty}h_0(x)=\lim_{\lambda\to+\infty}h_1(x)=\lim_{\lambda\to+\infty}h(x)=+\infty.$$ The intuitive meaning of this result is that large values of $\lambda$ prevent the telegraph process to get far from its initial position and, for $\lambda\to+\infty$, the process is trapped near its initial position. Therefore, the process cannot exit the interval in the limiting case. We finally discuss the hydrodynamic limit for $\lambda,c\to+\infty$ under the scaling $\frac{\lambda}{c^2}\to1$. In this case, it can be immediately verified that
$$\lim_{\substack{\lambda,c\to+\infty\\\lambda/c^2\to1}}h_0(x)=\lim_{\substack{\lambda,c\to+\infty\\\lambda/c^2\to1}}h_1(x)=\lim_{\substack{\lambda,c\to+\infty\\\lambda/c^2\to1}}h(x)=(b-x)(x-a)$$ which coincides with the mean exit time (\ref{zalon}) for Brownian motion. Of course, this is consistent with the limiting behaviour of the telegraph process. A graphical representation of the mean exit time of the telegraph process from an interval is given in Figure \ref{fig:h_univ}.

\begin{figure}[h!]
\centering
\includegraphics[scale=0.24]{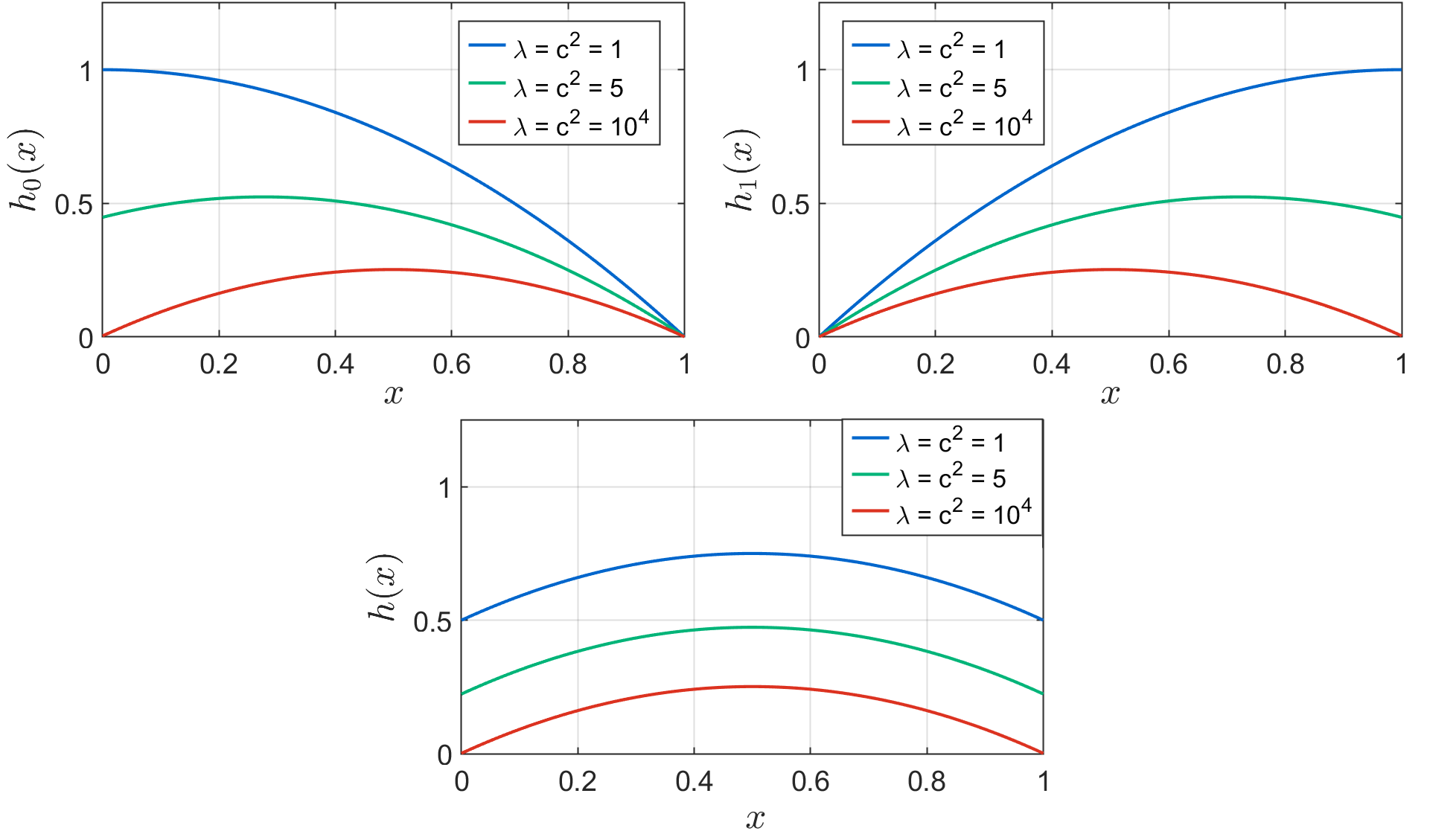}
\caption{Mean exit time from the interval $[0,1]$ for the standard telegraph process as a function of the starting point $x$. The plots were obtained under the parametrization $\lambda=c^2$ for different values of $\lambda$. When the process starts at an endpoint and initially moves toward the opposite endpoint, the mean exit time is strictly positive. However, it tends to 0 as $\lambda$ increases, consistently with the fact that the telegraph process converges to standard Brownian motion in the hydrodynamic limit.}
\label{fig:h_univ}
\end{figure}

\section{Telegraph process with drift in a closed interval}
In this section, we generalize our analysis of the exit distribution to the case of a telegraph process with drift. The telegraph process with drift has been investigated by Cane \cite{cane1975} and Beghin et al. \cite{beghinnieddu}. The authors showed that the process can be transformed into the driftless telegraph process by means of a Lorentz transformation, and subsequently managed to obtain the exact distribution in presence of drift. Moreover, the authors pointed out that there are essentially two ways to include a drift term in the univariate telegraph process. The first way is to assume that the process moves with two distinct velocities depending on whether it is moving leftwards or rightwards. In particular, it can be assumed that the telegraph process $X(t)$ has velocity $c_0$ when it moves rightwards with direction $d_0$, and velocity $c_1$ when it moves leftwards with direction $d_1$. If $c_0\neq c_1$, the difference between the velocities causes the process to drift away from its initial position. The second way to impose a drift to the telegraph process is to assume that the intensity of the direction changes takes two distinct values depending on the direction of the process. Assume that the intensity is $\lambda_0$ when $D(t)=d_0$ and $\lambda_1$ when $D(t)=d_1$. If $\lambda_0\neq \lambda_1$, the time spent moving rightwards and the time spent moving leftwards are different on average, which yields a drift effect. Clearly, these two types of drift can be combined at once. In our analysis, we adopt this general setting.\\
Consider a telegraph process $\{X(t)\}_{t\ge0}$ with initial value $x$. Assume that the process has velocity $c_0$ and switching intensity $\lambda_0$ when it moves with direction $d_0$, while the velocity and the switching rate are $c_1$ and $\lambda_1$ respectively when the direction of $X(t)$ is $d_1$. As in Section \ref{sec:base}, we fix an interval $[a,b]$ such that $x \in [a,b]$, and we denote by $\tau$ the first exit time of $\{X(t)\}_{t \ge 0}$ from $[a,b]$, as defined in formula (\ref{tau}). Without loss of generality, we are interested in studying the probability of the telegraph process exiting the interval $[a,b]$ through the upper endpoint $b$. Thus, for $j=0,1$, we consider the probabilities
$$u_j(x)=\mathbb{P}\Big(X(\tau)=b\;\Big\lvert X(0)=x,\; D(0)=d_j\Big),\qquad x\in[a,b].$$
and, removing the conditioning with respect to the initial direction $D(0)$,
\begin{equation}\label{u_driftdef}u(x)=\mathbb{P}\Big(X(\tau)=b\;\Big\lvert X(0)=x\Big)=\frac{u_0(x)+u_1(x)}{2},\qquad x\in[a,b].\end{equation}
By using the same arguments as in Section \ref{sec:base}, it can be verified that, for $x\in(a,b)$
\begin{equation}\label{ujsys_drift}
\begin{cases}
u_0(x)=u_0(x+c_0\,\Delta t)\,(1-\lambda_0\,\Delta t)+u_1(x)\,\lambda_0\,\Delta t + o(\Delta t)\\
u_1(x)=u_1(x-c_1\,\Delta t)\,(1-\lambda_1\,\Delta t)+u_0(x)\,\lambda_1\,\Delta t + o(\Delta t).
\end{cases}
\end{equation}
Thus, a first-order Taylor expansion of the equations in formula (\ref{ujsys_drift}) yields the linear system of ordinary differential equations
\begin{equation}\label{ujdsys_drift}
\begin{cases}
\dfrac{\mathrm{d} u_0}{\mathrm{d}x}=-\dfrac{\lambda_0}{c_0}\big[u_1-u_0\big]\\[0.75em]
\dfrac{\mathrm{d} u_1}{\mathrm{d}x}=-\dfrac{\lambda_1}{c_1}\big[u_1-u_0\big].
\end{cases}
\end{equation}
It is clear that the boundary conditions (\ref{bcep}) remain valid in the presence of a drift. Moreover, in view of Equation (\ref{ujdsys_drift}), it can be verified that the unconditional distribution of the exit point (\ref{u_driftdef}) satisfies the ordinary differential equation \begin{equation}\label{uode_drift}\dfrac{\mathrm{d}^2 u}{\mathrm{d}x^2}+\left(\frac{\lambda_1}{c_1}-\frac{\lambda_0}{c_0}\right)\dfrac{\mathrm{d} u}{\mathrm{d}x}=0.\end{equation}
Equation (\ref{uode_drift}) shows that, unlike in the driftless case, the exit point distribution is no longer a linear function of the initial state $x$ when the telegraph process $X(t)$ exhibits a drift. In particular, the following result holds.

\begin{thm}\label{thm:driftu}For $x\in[a,b]$, it holds that 
\begin{equation}u_0(x)=\frac{\lambda_0c_1\,e^{\left(\frac{\lambda_0}{c_0}-\frac{\lambda_1}{c_1}\right)x}-\lambda_1c_0\,e^{\left(\frac{\lambda_0}{c_0}-\frac{\lambda_1}{c_1}\right)a}}{\lambda_0c_1\,e^{\left(\frac{\lambda_0}{c_0}-\frac{\lambda_1}{c_1}\right)b}-\lambda_1c_0\,e^{\left(\frac{\lambda_0}{c_0}-\frac{\lambda_1}{c_1}\right)a}}\label{u_0sol_drift}\end{equation}
and
\begin{equation}u_1(x)=\frac{\lambda_1c_0\,e^{\left(\frac{\lambda_0}{c_0}-\frac{\lambda_1}{c_1}\right)x}-\lambda_1c_0\,e^{\left(\frac{\lambda_0}{c_0}-\frac{\lambda_1}{c_1}\right)a}}{\lambda_0c_1\,e^{\left(\frac{\lambda_0}{c_0}-\frac{\lambda_1}{c_1}\right)b}-\lambda_1c_0\,e^{\left(\frac{\lambda_0}{c_0}-\frac{\lambda_1}{c_1}\right)a}}\label{u_1sol_drift}\end{equation}
Consequently,
\begin{equation}u(x)=\frac{1}{2}\;\frac{(\lambda_1c_0+\lambda_0c_1)\,e^{\left(\frac{\lambda_0}{c_0}-\frac{\lambda_1}{c_1}\right)x}-2\lambda_1c_0\,e^{\left(\frac{\lambda_0}{c_0}-\frac{\lambda_1}{c_1}\right)a}}{\lambda_0c_1\,e^{\left(\frac{\lambda_0}{c_0}-\frac{\lambda_1}{c_1}\right)b}-\lambda_1c_0\,e^{\left(\frac{\lambda_0}{c_0}-\frac{\lambda_1}{c_1}\right)a}}\label{u_totsol_drift}\end{equation}
\end{thm}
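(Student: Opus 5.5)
We solve the linear system (\ref{ujdsys_drift}) directly, subject to the boundary conditions (\ref{bcep}), as in the proof of Theorem \ref{thm1}; the one change is that the coefficient matrix is no longer nilpotent. Write $\alpha=\frac{\lambda_0}{c_0}$, $\beta=\frac{\lambda_1}{c_1}$ and $\gamma=\alpha-\beta$.

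The key observation is that both right-hand sides of (\ref{ujdsys_drift}) are multiples of the single quantity $w=u_1-u_0$. Subtracting the first equation from the second gives the scalar equation
$$\frac{\mathrm{d}w}{\mathrm{d}x}=-(\beta-\alpha)\,w=\gamma\,w,$$
so $w(x)=K e^{\gamma x}$ for a constant $K$. Substituting this into the first equation yields $u_0'=-\alpha K e^{\gamma x}$, and therefore
$$u_0(x)=C-\frac{\alpha}{\gamma}K e^{\gamma x},\qquad u_1(x)=u_0(x)+Ke^{\gamma x}=C-\frac{\beta}{\gamma}K e^{\gamma x}.$$
Equivalently, the matrix $A=\begin{pmatrix}\alpha&-\alpha\\\beta&-\beta\end{pmatrix}$ has eigenvalues $0$ and $\gamma$, which produces the same two-parameter family.

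Next we impose $u_0(b)=1$ and $u_1(a)=0$. These give the linear system
$$C-\frac{\alpha}{\gamma}Ke^{\gamma b}=1,\qquad C=\frac{\beta}{\gamma}Ke^{\gamma a}.$$
It follows that $K=-\gamma/(\alpha e^{\gamma b}-\beta e^{\gamma a})$. Then we clear denominators by multiplying the numerator and denominator by $c_0c_1$, using $\alpha c_0c_1=\lambda_0c_1$ and $\beta c_0c_1=\lambda_1c_0$. This should reproduce (\ref{u_0sol_drift}) and (\ref{u_1sol_drift}) exactly. Averaging as in (\ref{u_driftdef}) then gives (\ref{u_totsol_drift}), with numerator $(\lambda_0c_1+\lambda_1c_0)e^{\gamma x}-2\lambda_1c_0e^{\gamma a}$.

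The only delicate point is the degenerate case $\gamma=0$, i.e.\ $\lambda_0/c_0=\lambda_1/c_1$, where the formula above divides by $\gamma$. There the stated expressions become $0/0$ when we substitute $\alpha=\beta$. The stated formulas should be read as their limits as $\gamma\to0$, which recover the linear solution of Theorem \ref{thm1} type. Alternatively, we redo the computation with a nilpotent $A$, exactly as in Theorem \ref{thm1}.

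We also need the denominator $\lambda_0c_1e^{\gamma b}-\lambda_1c_0e^{\gamma a}$ to be nonzero when $\gamma\neq0$. For $\gamma>0$ we have $\lambda_0c_1>\lambda_1c_0$ and $e^{\gamma b}>e^{\gamma a}$, so the denominator is positive. For $\gamma<0$ both inequalities reverse, so it is negative. This guarantees that the boundary value problem has a unique solution.
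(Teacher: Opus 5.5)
Your proof is correct and complete; it differs from the paper only in how the linear system is solved.

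\textbf{The paper's route.} It writes the general solution as $e^{Ax}(\kappa_0,\kappa_1)^{\top}$ and sums the exponential series using the recurrence $A^k=\gamma^{k-1}A$. This gives $e^{Ax}=I+\frac{e^{\gamma x}-1}{\gamma}A$, as in (\ref{recgene}).

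\textbf{Your route.} You decouple the system through $w=u_1-u_0$, which satisfies the scalar equation $w'=\gamma w$, and then integrate once.

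\textbf{Why they agree.} Both rest on the same fact: $A$ has rank one and trace $\gamma$. Hence $A^2=\gamma A$, and both right-hand sides of (\ref{ujdsys_drift}) are multiples of $w$. The two-parameter families coincide, with $K=\kappa_1-\kappa_0$ and $C=(\alpha\kappa_1-\beta\kappa_0)/\gamma$.

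\textbf{What each buys.} Your reduction is more elementary and reaches the constants with less algebra. The paper's matrix-exponential form is reused, via variation of constants, for the mean exit time in Theorem \ref{thm:put4}. Your reduction would also work there, since $w$ again satisfies a scalar linear equation, $w'=\gamma w+\frac{1}{c_0}+\frac{1}{c_1}$.

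\textbf{Your treatment of the edge cases is a genuine addition.} The paper divides by $\lambda_0c_1-\lambda_1c_0$ in (\ref{recgene}) without comment. So the stated formulas implicitly assume $\lambda_0/c_0\neq\lambda_1/c_1$, and the paper never checks that the denominator is nonzero. Your continuous extension at $\gamma=0$ correctly recovers the linear solution of Theorem \ref{thm1}, with $\lambda/c$ replaced by the common ratio $\lambda_0/c_0=\lambda_1/c_1$. Your sign argument for the denominator when $\gamma\neq0$ is also correct.
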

\begin{proof} Similarly to Theorem \ref{thm1}, we express the solution to the system (\ref{ujdsys_drift}) in matrix form as
\begin{equation}\begin{pmatrix}u_0(x)\\u_1(x)\end{pmatrix}=e^{A\,x}\cdot\begin{pmatrix}\kappa_0\\\kappa_1\end{pmatrix}\label{ujgeneralsol_drift}\end{equation} where
$A=\begin{pmatrix}\frac{\lambda_0}{c_0} & -\frac{\lambda_0}{c_0}\\[0.5em]\frac{\lambda_1}{c_1} & -\frac{\lambda_1}{c_1}\end{pmatrix}$
and the constants $\kappa_0,\kappa_1$ depend on the boundary conditions. We now observe that the matrix $A$ satisfies, for all $k\in\mathbb{N}$, the recurrence relation
$$A^k=\left(\frac{\lambda_0}{c_0}-\frac{\lambda_1}{c_1}\right)^{k-1}A$$ which implies that
\begin{equation}e^{A\, x}=I+\sum_{k=1}^\infty\frac{A^k x^k}{k!}=I+\frac{c_0\,c_1}{\lambda_0 c_1-\lambda_1c_0}\left(e^{\left(\frac{\lambda_0}{c_0}-\frac{\lambda_1}{c_1}\right)x}-1\right)A.\label{recgene}\end{equation} By substituting formula (\ref{recgene}) into (\ref{ujgeneralsol_drift}), we obtain that
\begin{equation}\label{generalsol01_drift}
\begin{aligned}
u_0(x)=&\frac{\lambda_0c_1(\kappa_0-\kappa_1)\,e^{\left(\frac{\lambda_0}{c_0}-\frac{\lambda_1}{c_1}\right)x}+\kappa_1\lambda_0c_1-\kappa_0\lambda_1c_0}{\lambda_0c_1-\lambda_1c_0}\\
u_1(x)=&\frac{\lambda_1c_0(\kappa_0-\kappa_1)\,e^{\left(\frac{\lambda_0}{c_0}-\frac{\lambda_1}{c_1}\right)x}+\kappa_1\lambda_0c_1-\kappa_0\lambda_1c_0}{\lambda_0c_1-\lambda_1c_0}.
\end{aligned}
\end{equation}
To determine the coefficients $\kappa_0$ and $\kappa_1$, we impose the boundary conditions (\ref{bcep}), which yield
\begin{equation}\label{kappa01_drift}
\begin{aligned}
\kappa_0=&\frac{\lambda_0c_1-\lambda_1c_0\,e^{\left(\frac{\lambda_0}{c_0}-\frac{\lambda_1}{c_1}\right)a}}{\lambda_0c_1\,e^{\left(\frac{\lambda_0}{c_0}-\frac{\lambda_1}{c_1}\right)b}-\lambda_1c_0\,e^{\left(\frac{\lambda_0}{c_0}-\frac{\lambda_1}{c_1}\right)a}}\\[0.5em]
\kappa_1=&\frac{\lambda_1c_0\left(1-e^{\left(\frac{\lambda_0}{c_0}-\frac{\lambda_1}{c_1}\right)a}\right)}{\lambda_0c_1\,e^{\left(\frac{\lambda_0}{c_0}-\frac{\lambda_1}{c_1}\right)b}-\lambda_1c_0\,e^{\left(\frac{\lambda_0}{c_0}-\frac{\lambda_1}{c_1}\right)a}}.
\end{aligned}
\end{equation}
The proof is completed by substituting the coefficients given in equation (\ref{kappa01_drift}) into the functions in equation (\ref{generalsol01_drift}). 
\end{proof}
\noindent Theorem \ref{thm:driftu} provides explicit expressions for the exit-point distribution of a telegraph process with drift from an interval, conditional and unconditional on the initial direction. A graphical representation of these probabilities is given in Figure \ref{fig:u_univ_drift}.
\begin{figure}[h!]
\centering
\includegraphics[scale=0.24]{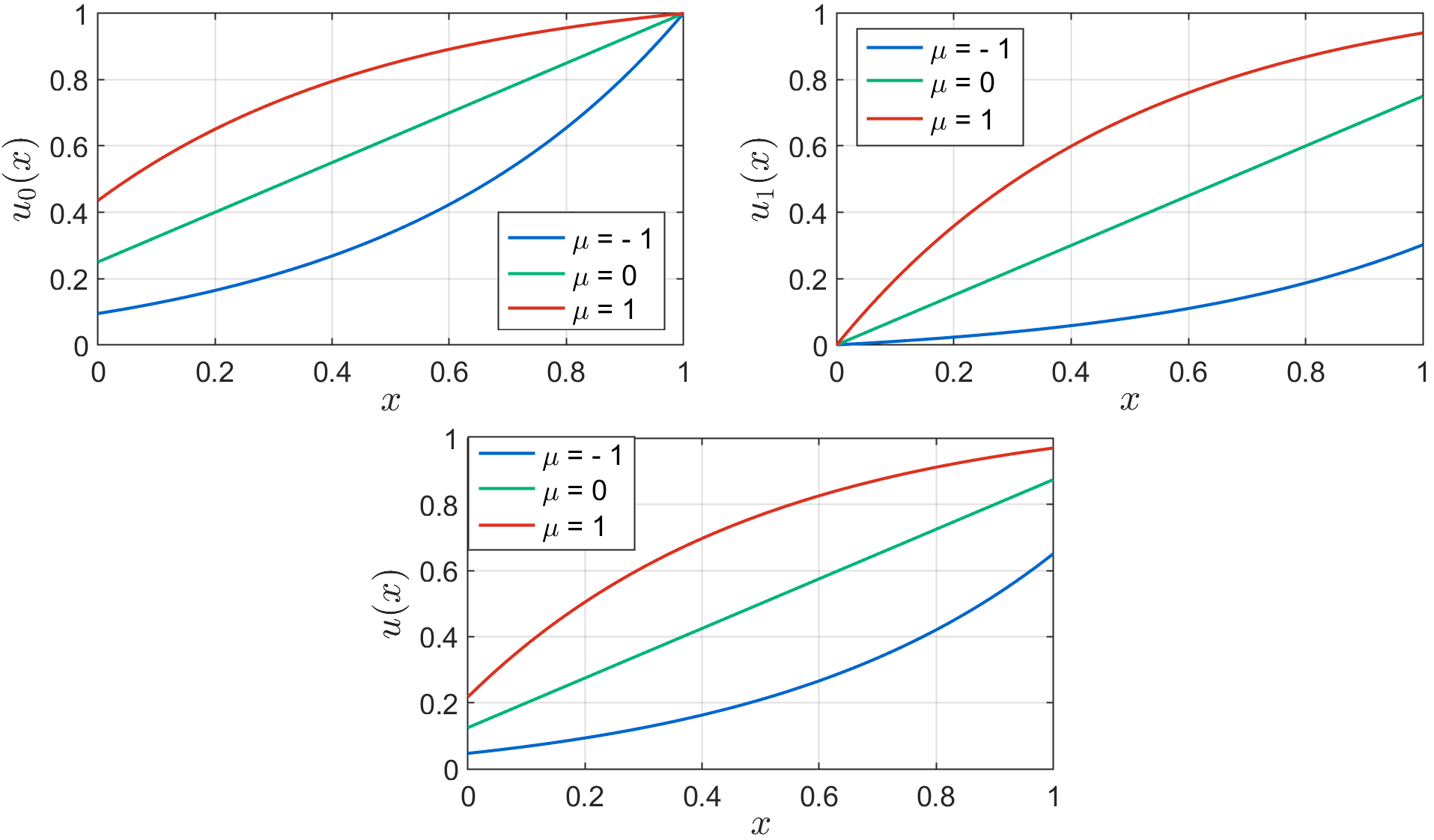}
\caption{Probability of the telegraph process with drift exiting the interval $[0,1]$ through the upper endpoint as a function of the starting point $x$. The plots are obtained under the parametrization $c_0=3$, $\lambda_0=c_0^2$, $\lambda_1=c_1^2$ and $\frac{1}{2}\left(\frac{\lambda_1}{c_1}-\frac{\lambda_0}{c_0}\right)=\mu$, for different values of $\mu$. Under this parametrization, the parameter $\mu$ describes the asymmetry of the process and its sign governs the concavity or convexity of the exit probabilities as functions of the initial position.}
\label{fig:u_univ_drift}
\end{figure}
As in the symmetric telegraph case, our results can be verified to be consistent with the classical exit distribution for Brownian motion with drift. To this aim, however, it is necessary to establish a suitable scaling under which both velocities and switching rates diverge in the hydrodynamic limit. In particular, as $\lambda_0,\lambda_1,c_0,c_1\to+\infty$, we assume that $\frac{\lambda_0}{c_0^2}\to1$, $\frac{\lambda_1}{c_1^2}\to1$ and, for a fixed constant $\mu\in\mathbb{R}$, $\frac{1}{2}\left(\frac{\lambda_1}{c_1}-\frac{\lambda_0}{c_0}\right)\to\mu.$ Under these scaling assumptions, it can be immediately verified that 
$$\lim_{\lambda_0,\lambda_1,c_0,c_1\to+\infty}u(x)=\frac{e^{-2\mu x}-e^{-2\mu a}}{e^{-2\mu b}-e^{-2\mu a}}$$ which coincides with the exit point distribution of a one-dimensional Brownian motion with drift coefficient $\mu$. Clearly, the same limit also holds for the probabilities $u_0$ and $u_1$. Moreover, we remark that, in the hydrodynamic limit, equation (\ref{uode_drift}) reduces to $\Delta u +2\mu u=0$, which is the classical Laplace-type equation which governs the exit distribution of a Brownian motion with drift.\\

We conclude this section by studying the mean exit time of the telegraph process with drift from the interval $[a,b]$. Thus, we retain the assumption that $X(t)$ moves rightwards with velocity $c_0$ and switching intensity $\lambda_0$, and leftwards with velocity $c_1$ and intensity $\lambda_1$. Under this assumption, we consider, for $j=0,1$ as in Section \ref{sec:base}, the functions
\begin{equation}h_j(x)=\mathbb{E}\left[\tau\,\Big\lvert X(0)=x,\,D(0)=d_j\right],\qquad x\in[a,b]\label{hjdefdrift}\end{equation}and
\begin{equation}h(x)=\mathbb{E}\left[\tau\,\Big\lvert X(0)=x\right]=\frac{h_0(x)+h_1(x)}{2},\qquad x\in[a,b].\label{hdefdrift}\end{equation} By means of the usual arguments, it can be verified that the following system of equations holds for $x\in(a,b)$ in the presence of drift
\begin{equation*}
\begin{cases}
h_0(x)=\Delta t+h_0(x+c_0\,\Delta t)\,(1-\lambda_0\,\Delta t)+h_1(x)\,\lambda_0\,\Delta t + o(\Delta t)\\
h_1(x)=\Delta t+h_1(x-c_1\,\Delta t)\,(1-\lambda_1\,\Delta t)+h_0(x)\,\lambda_1\,\Delta t + o(\Delta t)
\end{cases}
\end{equation*}
for a small time interval $\Delta t$. Thus, the conditional expected values in formula (\ref{hjdefdrift}) satisfy the non-homogeneous system of ordinary differential equations
\begin{equation}\label{hjdsysdrift}
\begin{cases}
\dfrac{\mathrm{d} h_0}{\mathrm{d}x}=-\dfrac{\lambda_0}{c_0}\big[h_1-h_0\big]-\dfrac{1}{c_0}\\[0.75em]
\dfrac{\mathrm{d} h_1}{\mathrm{d}x}=-\dfrac{\lambda_1}{c_1}\big[h_1-h_0\big]+\dfrac{1}{c_1}.
\end{cases}
\end{equation}
Moreover, in view of the system (\ref{hjdsysdrift}), the expected value (\ref{hjdefdrift}) satisfies the ordinary differential equation
\begin{equation}\label{hodedrift}\dfrac{\mathrm{d}^2 h}{\mathrm{d}x^2}+\left(\frac{\lambda_1}{c_1}-\frac{\lambda_0}{c_0}\right)\dfrac{\mathrm{d} h}{\mathrm{d}x}=-\frac{\lambda_0+\lambda_1}{c_0c_1}.\end{equation}
Clearly, similarly to the case without drift, the boundary conditions (\ref{hboundcond}) hold. Thus, we obtain the following result.
\begin{thm}\label{thm:put4}
For $x\in[a,b]$, it holds that
\begin{align}\label{got1drift}h_0(x)=\,&\frac{\lambda_0+\lambda_1}{\lambda_0c_1-\lambda_1c_0}\,(x-a)\nonumber\\
&\quad-\frac{(\lambda_0+\lambda_1)(b-a)}{\lambda_0c_1-\lambda_1c_0}\cdot\frac{\lambda_0c_1\,e^{\left(\frac{\lambda_0}{c_0}-\frac{\lambda_1}{c_1}\right)x}-\lambda_1c_0\,e^{\left(\frac{\lambda_0}{c_0}-\frac{\lambda_1}{c_1}\right)a}}{\lambda_0c_1\,e^{\left(\frac{\lambda_0}{c_0}-\frac{\lambda_1}{c_1}\right)b}-\lambda_1c_0\,e^{\left(\frac{\lambda_0}{c_0}-\frac{\lambda_1}{c_1}\right)a}}\nonumber\\
&\quad+\frac{\lambda_0c_1(c_0+c_1)}{\lambda_0c_1-\lambda_1c_0}\cdot\frac{e^{\left(\frac{\lambda_0}{c_0}-\frac{\lambda_1}{c_1}\right)b}-e^{\left(\frac{\lambda_0}{c_0}-\frac{\lambda_1}{c_1}\right)x}}{\lambda_0c_1e^{\left(\frac{\lambda_0}{c_0}-\frac{\lambda_1}{c_1}\right)b}-\lambda_1c_0e^{\left(\frac{\lambda_0}{c_0}-\frac{\lambda_1}{c_1}\right)a}}
\end{align}
and
\begin{align}\label{got2drift}h_1(x)=\,&\frac{\lambda_0+\lambda_1}{\lambda_0c_1-\lambda_1c_0}\,(x-a)\nonumber\\
&\quad-\lambda_1c_0\,\frac{c_0+c_1+(\lambda_0+\lambda_1)(b-a)}{\lambda_0c_1-\lambda_1c_0}\cdot\frac{e^{\left(\frac{\lambda_0}{c_0}-\frac{\lambda_1}{c_1}\right)x}-e^{\left(\frac{\lambda_0}{c_0}-\frac{\lambda_1}{c_1}\right)a}}{\lambda_0c_1e^{\left(\frac{\lambda_0}{c_0}-\frac{\lambda_1}{c_1}\right)b}-\lambda_1c_0e^{\left(\frac{\lambda_0}{c_0}-\frac{\lambda_1}{c_1}\right)a}}
\end{align}
Consequently, 
\begin{align}\label{got3drift}h(&x)=\frac{\lambda_0+\lambda_1}{\lambda_0c_1-\lambda_1c_0}\,(x-a)\nonumber\\
&-\frac{(\lambda_0+\lambda_1)(b-a)}{2(\lambda_0c_1-\lambda_1c_0)}\cdot\frac{(\lambda_0c_1+\lambda_1c_0)\,e^{\left(\frac{\lambda_0}{c_0}-\frac{\lambda_1}{c_1}\right)x}-2\lambda_1c_0\,e^{\left(\frac{\lambda_0}{c_0}-\frac{\lambda_1}{c_1}\right)a}}{\lambda_0c_1\,e^{\left(\frac{\lambda_0}{c_0}-\frac{\lambda_1}{c_1}\right)b}-\lambda_1c_0\,e^{\left(\frac{\lambda_0}{c_0}-\frac{\lambda_1}{c_1}\right)a}}\nonumber\\
&+\frac{c_0+c_1}{2(\lambda_0c_1-\lambda_1c_0)}\cdot\frac{\lambda_0c_1\,e^{\left(\frac{\lambda_0}{c_0}-\frac{\lambda_1}{c_1}\right)b}-(\lambda_0c_1+\lambda_1c_0)e^{\left(\frac{\lambda_0}{c_0}-\frac{\lambda_1}{c_1}\right)x}+\lambda_1c_0\,e^{\left(\frac{\lambda_0}{c_0}-\frac{\lambda_1}{c_1}\right)a}}{\lambda_0c_1e^{\left(\frac{\lambda_0}{c_0}-\frac{\lambda_1}{c_1}\right)b}-\lambda_1c_0e^{\left(\frac{\lambda_0}{c_0}-\frac{\lambda_1}{c_1}\right)a}}
\end{align}
\end{thm}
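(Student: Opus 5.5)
The plan is to solve the non-homogeneous linear system (\ref{hjdsysdrift}) with the boundary conditions (\ref{hboundcond}), in the same way as in Theorem \ref{thm:fring}. I will use the matrix $A$ and its exponential (\ref{recgene}) from the proof of Theorem \ref{thm:driftu}. Write $\theta=\frac{\lambda_0}{c_0}-\frac{\lambda_1}{c_1}$ and assume $\theta\neq0$, i.e. $\lambda_0c_1\neq\lambda_1c_0$, which the stated formulas require. In matrix form the system is
$$\frac{\mathrm{d}}{\mathrm{d}x}\begin{pmatrix}h_0\\ h_1\end{pmatrix}=A\begin{pmatrix}h_0\\ h_1\end{pmatrix}+\begin{pmatrix}-1/c_0\\ 1/c_1\end{pmatrix}.$$
Its general solution is the homogeneous part $e^{A(x-a)}\kappa$ plus a particular solution.

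Rather than evaluating the variation-of-constants integral $\int_a^x e^{A(x-y)}\mathrm{d}y$ directly, I will look for a particular solution of the form $h_0=\alpha x+\beta_0$, $h_1=\alpha x+\beta_1$. This ansatz is suggested by the linear term $\frac{\lambda_0+\lambda_1}{\lambda_0c_1-\lambda_1c_0}(x-a)$ common to (\ref{got1drift}) and (\ref{got2drift}).

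\begin{itemize}
\item Substituting gives $\alpha=-\frac{\lambda_0}{c_0}(\beta_1-\beta_0)-\frac{1}{c_0}$ and $\alpha=-\frac{\lambda_1}{c_1}(\beta_1-\beta_0)+\frac{1}{c_1}$.
\item Eliminating $\delta=\beta_1-\beta_0$ yields $\delta=-\frac{c_0+c_1}{\lambda_0c_1-\lambda_1c_0}$ and $\alpha=\frac{\lambda_0+\lambda_1}{\lambda_0c_1-\lambda_1c_0}$.
\item One of $\beta_0,\beta_1$ stays free, but it can be absorbed into the homogeneous part.
\end{itemize}

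By (\ref{generalsol01_drift}), the homogeneous solutions have the form
$$h_0^{\mathrm{hom}}=C+\lambda_0c_1K e^{\theta x},\qquad h_1^{\mathrm{hom}}=C+\lambda_1c_0K e^{\theta x}.$$
Indeed, the differences in (\ref{generalsol01_drift}) are proportional to $e^{\theta x}$ with weights $\lambda_0c_1$ and $\lambda_1c_0$, and a constant vector $(C,C)$ is in the kernel of $A$. The general solution is therefore
$$h_0=\alpha(x-a)+C+\lambda_0c_1Ke^{\theta x},\qquad h_1=\alpha(x-a)+\delta+C+\lambda_1c_0Ke^{\theta x}.$$
The conditions $h_0(b)=0$ and $h_1(a)=0$ give a $2\times2$ linear system in $(C,K)$. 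Its determinant is, up to sign, $\lambda_0c_1e^{\theta b}-\lambda_1c_0e^{\theta a}$, which is exactly the denominator appearing in the statement. Solving by Cramer's rule gives $C$ and $K$ explicitly.

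Substituting $C$ and $K$ back, I will regroup the terms to match (\ref{got1drift}) and (\ref{got2drift}).
\begin{itemize}
\item For $h_1$, the constants combine into a multiple of $e^{\theta x}-e^{\theta a}$, as $h_1(a)=0$ requires. The coefficient is $-\lambda_1c_0\frac{c_0+c_1+(\lambda_0+\lambda_1)(b-a)}{\lambda_0c_1-\lambda_1c_0}$ divided by the common denominator.
\item For $h_0$, I will split the $K$-contribution into a part proportional to $(\lambda_0+\lambda_1)(b-a)$ and a part proportional to $c_0+c_1$, using the identity
$$\lambda_0c_1e^{\theta x}-\lambda_1c_0e^{\theta a}=\bigl(\lambda_0c_1e^{\theta b}-\lambda_1c_0e^{\theta a}\bigr)-\lambda_0c_1\bigl(e^{\theta b}-e^{\theta x}\bigr).$$
\end{itemize}
Finally, $h=\frac{h_0+h_1}{2}$ gives (\ref{got3drift}) after collecting the $(b-a)$ terms and the $(c_0+c_1)$ terms over the common denominator. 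As a check, I will verify that $h$ satisfies (\ref{hodedrift}).

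The main obstacle is not conceptual but algebraic: rearranging the Cramer's-rule expressions for $C$ and $K$ into the specific grouping displayed in the theorem. For that I will first verify the boundary conditions directly on the claimed formulas, and substitute them into (\ref{hjdsysdrift}). Uniqueness of the solution, guaranteed by the nonzero determinant, then closes the argument even if the forward algebra is messy.
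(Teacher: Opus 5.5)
Your proposal is correct, and the algebra checks out. Write $\Lambda=\lambda_0c_1-\lambda_1c_0$ and $D=\lambda_0c_1e^{\theta b}-\lambda_1c_0e^{\theta a}$. The boundary conditions give $K=-\bigl(c_0+c_1+(\lambda_0+\lambda_1)(b-a)\bigr)/(\Lambda D)$. Eliminating $C$ then yields $h_1(x)=\alpha(x-a)+\lambda_1c_0K\,(e^{\theta x}-e^{\theta a})$ and $h_0(x)=\alpha(x-b)+\lambda_0c_1K\,(e^{\theta x}-e^{\theta b})$. Your identity converts these into (\ref{got1drift}) and (\ref{got2drift}), and averaging gives (\ref{got3drift}).

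Your route differs from the paper's in how the general solution is produced.

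\begin{itemize}
\item The paper writes the solution as $e^{A(x-a)}\kappa+\int_a^x e^{A(x-y)}\,\mathrm{d}y\,(-1/c_0,\,1/c_1)^{T}$. It evaluates the integral with the closed form (\ref{recgene}) and parametrizes by the initial values $\kappa=(h_0(a),h_1(a))$. This makes $\kappa_1=0$ immediate, but leaves $\kappa_0$ as an opaque expression that must be re-expanded to reach the displayed grouping.
\item You find a particular solution by undetermined coefficients and expand the homogeneous part in the eigenbasis $(1,1)$, $(\lambda_0c_1,\lambda_1c_0)$ of $A$. This avoids any integration and shows the structure of the answer from the start: the slope $\alpha$, the weights $\lambda_0c_1$ and $\lambda_1c_0$, and the factors $e^{\theta x}-e^{\theta a}$ and $e^{\theta x}-e^{\theta b}$ forced by the two boundary conditions.
\end{itemize}

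The cost of your route is that the ansatz has to be guessed. It works because, for $\Lambda\neq0$, $\mathbb{R}^2=\ker A\oplus\operatorname{Im}A$ with $\ker A$ spanned by $(1,1)$. The constant forcing therefore splits into a part producing linear growth along $(1,1)$ and a part absorbed by a constant offset. Variation of constants, by contrast, is mechanical and needs no guess.

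Your fallback is also sound: verify the stated formulas against (\ref{hjdsysdrift}) and (\ref{hboundcond}), then invoke uniqueness. Just record why $D\neq0$. If $\Lambda>0$, then $\theta>0$ and $\lambda_0c_1e^{\theta b}>\lambda_1c_0e^{\theta b}>\lambda_1c_0e^{\theta a}$, so $D>0$. Symmetrically, $D<0$ if $\Lambda<0$.
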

\begin{proof}
To find the solution to the system (\ref{hjdsysdrift}), we solve the associated homogeneous system and use the method of variation of constants. This leads to the general solution
\begin{equation}\label{targa}
\begin{pmatrix}h_0(x)\\[0.3em]h_1(x)\end{pmatrix}=e^{A\,(x-a)}\cdot\begin{pmatrix}\kappa_0\\[0.3em]\kappa_1\end{pmatrix}+\int_a^x e^{A\,(x-y)}\,\mathrm{d}y\cdot \begin{pmatrix}-\frac{1}{c_0}\\[0.3em]\;\frac{1}{c_1}\end{pmatrix}\end{equation} where $A=\begin{pmatrix}\frac{\lambda_0}{c_0} & -\frac{\lambda_0}{c_0}\\[0.5em]\frac{\lambda_1}{c_1} & -\frac{\lambda_1}{c_1}\end{pmatrix}$. The matrix exponential of $A$ can be obtained as in formula  (\ref{recgene}). Thus, the solution (\ref{targa}) can be expressed as
\begin{align}h_0(x)=&\kappa_0+\frac{\lambda_0+\lambda_1}{\lambda_0c_1-\lambda_1c_0}\,(x-a)\nonumber\\
&\;+\lambda_0c_1\left[\frac{\kappa_0-\kappa_1}{\lambda_0c_1-\lambda_1c_0}-\frac{c_0+c_1}{(\lambda_0c_1-\lambda_1c_0)^2}\right]\left(e^{\left(\frac{\lambda_0}{c_0}-\frac{\lambda_1}{c_1}\right)(x-a)}-1\right)\nonumber\end{align}
and
\begin{align}h_1(x)=&\kappa_1+\frac{\lambda_0+\lambda_1}{\lambda_0c_1-\lambda_1c_0}\,(x-a)\nonumber\\
&\;+\lambda_1c_0\left[\frac{\kappa_0-\kappa_1}{\lambda_0c_1-\lambda_1c_0}-\frac{c_0+c_1}{(\lambda_0c_1-\lambda_1c_0)^2}\right]\left(e^{\left(\frac{\lambda_0}{c_0}-\frac{\lambda_1}{c_1}\right)(x-a)}-1\right).\nonumber\end{align}
By imposing the boundary conditions (\ref{hboundcond}), we obtain that
$$\kappa_0=\frac{(\lambda_0c_1-\lambda_1c_0)(\lambda_0+\lambda_1)(b-a)-\lambda_0c_1(c_0+c_1)\left(e^{\left(\frac{\lambda_0}{c_0}-\frac{\lambda_1}{c_1}\right)(b-a)}-1\right)}{(\lambda_0c_1-\lambda_1c_0)\left(\lambda_1c_0-\lambda_0c_1\,e^{\left(\frac{\lambda_0}{c_0}-\frac{\lambda_1}{c_1}\right)(b-a)}\right)}$$ and $\kappa_1=0$, which yields formulas (\ref{got1drift}) and (\ref{got2drift}). Formula (\ref{got3drift}) follows immediately.
\end{proof}
We have now obtained the explicit expression of the mean exit time of a telegraph process with drift from an interval, and we provide a graphical representation in Figure \ref{fig:h_univ_drift}.
\begin{figure}[h!]
\centering
\includegraphics[scale=0.24]{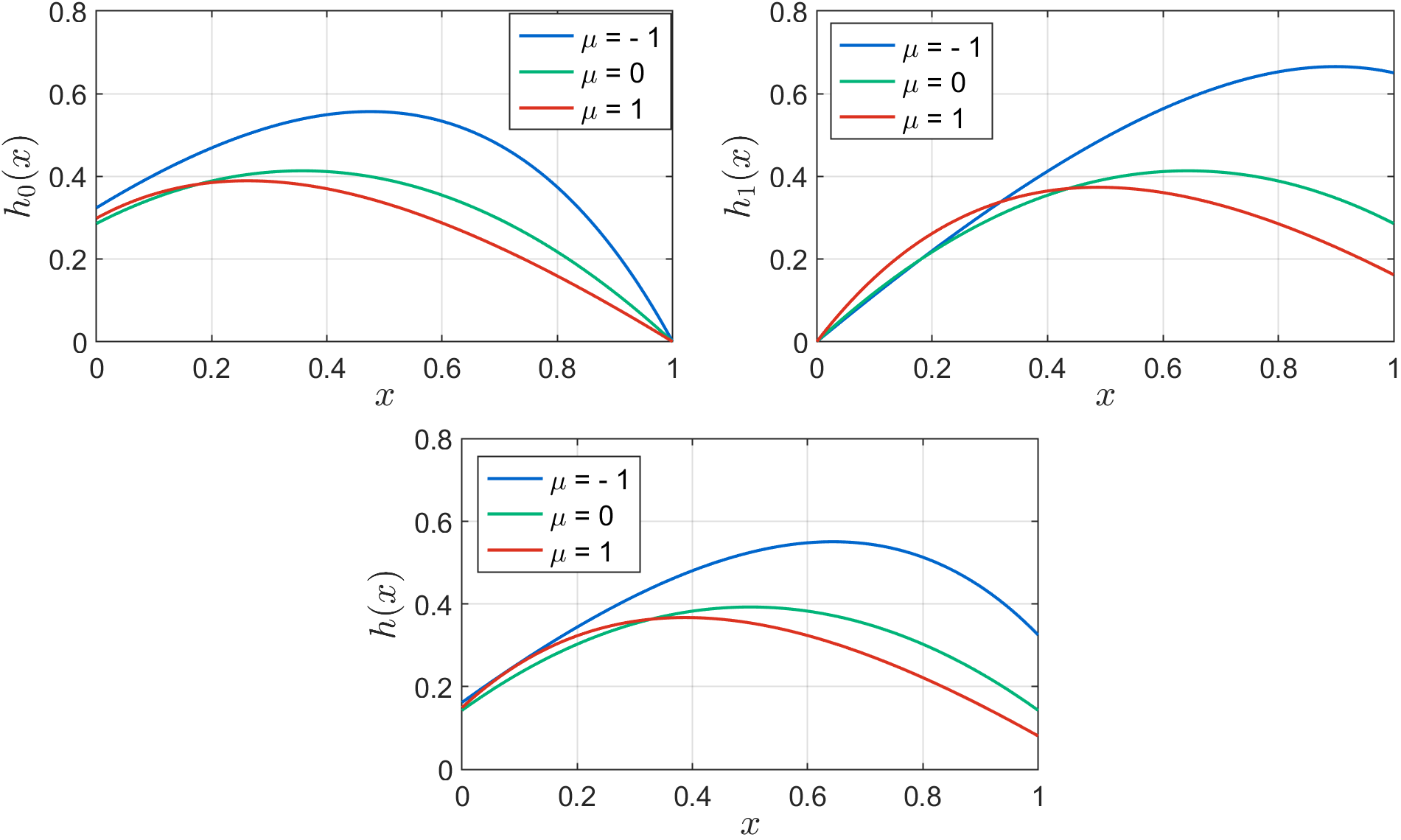}
\caption{Mean exit times from the interval $[0,1]$ for a telegraph process with drift as a function of the initial position $x$. The plots are obtained under the parametrization $c_0=3$, $\lambda_0=c_0^2$, $\lambda_1=c_1^2$ and $\frac{1}{2}\left(\frac{\lambda_1}{c_1}-\frac{\lambda_0}{c_0}\right)=\mu$, for different values of $\mu$. It is worth noting that, unconditional of the initial direction, the mean exit time is no longer a symmetric function of $x$ if $\mu\neq0$.}
\label{fig:h_univ_drift}
\end{figure}
Similarly to the previous results, we shall verify that the results of Theorem \ref{thm:put4} are consistent with the analogue results for Brownian motion with drift. In particular, by taking the hydrodynamic limit for $\lambda_0,\lambda_1,c_0,c_1\to+\infty$ under the scaling $\frac{\lambda_0}{c_0^2}\to1$, $\frac{\lambda_1}{c_1^2}\to1$ and $\frac{1}{2}\left(\frac{\lambda_1}{c_1}-\frac{\lambda_0}{c_0}\right)\to\mu$, it can be verified that 
$$\lim_{\lambda_0,\lambda_1,c_0,c_1\to+\infty}h(x)=\frac{b-a}{\mu}\cdot \frac{e^{-2\mu x}-e^{-2\mu a}}{e^{-2\mu b}-e^{-2\mu a}}-\frac{x-a}{\mu}$$ which coincides with the exit point distribution of a one-dimensional Brownian motion with drift coefficient $\mu$.

\section{On the exit point of a planar random motion from an infinite strip}
Up to now, we have studied how the classical link between the Poisson and Laplace equations and Brownian motion extends to the one-dimensional telegraph process. We now move beyond the univariate case to explore the exit point distribution of planar finite-velocity random motions from a given domain, along with the associated boundary-value problems. To this end, we consider the planar random motion first proposed by Orsingher \cite{orsingher2000exact}. Specifically, we consider a bivariate stochastic process $\big(X(t),Y(t)\big)$ which can move along four orthogonal directions
$$d_j=\left(\cos\left(\frac{\pi j}{2}\right),\,\sin\left(\frac{\pi j}{2}\right)\right),\qquad j=0,1,2,3.$$
Of course, $d_j=d_{j+4n}$ for all integers $n$. At the initial time $t=0$, the process $\big(X(t),Y(t)\big)$ lies at a fixed point $(x,y)\in\mathbb{R}^2$ and starts moving along one of the four possible directions, with each direction having equal probability $\frac{1}{4}$. The process then moves with constant velocity $c>0$ and changes direction at random times. As usual, the direction changes are paced by a homogeneous Poisson process $N(t)$ with intensity $\lambda>0$. Each time a Poisson event occurs, the process can turn clockwise, from $d_j$ to $d_{j-1}$, or counterclockwise, from $d_j$ to $d_{j+1}$, each with probability $\frac{1}{2}$. The direction of $\big(X(t),Y(t)\big)$ at time $t$ is denoted by $D(t)$. Clearly, $D(t)\in\{d_0,d_1,d_2,d_3\}$. For a detailed study of the process $\big(X(t),Y(t)\big)$ and its distribution, we refer to the paper by Marchione and Orsingher \cite{MOcorr}. In the present paper, we are interested in studying the distribution of the exit point of $\big(X(t),Y(t)\big)$ from a given set.\\
\noindent Consider a sufficiently regular connected set $\Gamma\subset{R}^2$, and assume that $(x_0,y_0)\in\Gamma$. Denote by $\tau$ the first exit time of $\big(X(t),Y(t)\big)$ from $\Gamma$, that is \begin{equation}\tau=\{\inf t>0:\;\big(X(t),Y(t)\big)\notin\Gamma\}.\label{bsg}\end{equation}
By considering a point $\mathbf{z}\in\partial \Gamma$, and denoting by $\mathrm{d}z$ its arc length element along $\partial \Gamma$, we are interested in studying the probability density function $u(x,y\mathord{;}\mathbf{z})$ defined as 
\begin{equation}\label{bivu}u(x,y\mathord{;}\mathbf{z})\;\mathrm{d}z=\mathbb{P}\Big(\big(X(\tau),Y(\tau)\big)\in\mathrm{d}\mathbf{z}\Big\lvert\,X(0)=x,\,Y(0)=y\Big).\end{equation}
Similarly, by conditioning with respect to the initial direction of the process, we consider, for $j=0,1,2,3$, the density functions
\begin{equation}\label{bivuj}u_j(x,y\mathord{;}\mathbf{z})\;\mathrm{d}z=\mathbb{P}\Big(\big(X(\tau),Y(\tau)\big)\in\mathrm{d}\mathbf{z}\Big\lvert\,X(0)=x,\,Y(0)=y,\,D(0)=d_j\Big).\end{equation}
Our analysis starts by observing that, by using the same arguments as in the previous sections of this paper, the following system of equations holds for $(x,y)$ in the interior of $\Gamma$ and small values of $\Delta t$
\begin{equation}\label{bivusys}
\begin{cases}
u_0(x,y)=u_0(x+c\,\Delta t,y)\,(1-\lambda\,\Delta t)+u_1(x,y)\,\frac{\lambda\,\Delta t}{2}+u_3(x,y)\,\frac{\lambda\,\Delta t}{2} + o(\Delta t)\\
u_1(x,y)=u_1(x,y+c\,\Delta t)\,(1-\lambda\,\Delta t)+u_0(x,y)\,\frac{\lambda\,\Delta t}{2}+u_2(x,y)\,\frac{\lambda\,\Delta t}{2} + o(\Delta t)\\
u_2(x,y)=u_2(x-c\,\Delta t,y)\,(1-\lambda\,\Delta t)+u_1(x,y)\,\frac{\lambda\,\Delta t}{2}+u_3(x,y)\,\frac{\lambda\,\Delta t}{2} + o(\Delta t)\\
u_3(x,y)=u_3(x,y-c\,\Delta t)\,(1-\lambda\,\Delta t)+u_0(x,y)\,\frac{\lambda\,\Delta t}{2}+u_2(x,y)\,\frac{\lambda\,\Delta t}{2} + o(\Delta t)
\end{cases}\end{equation}
where we have omitted the dependence on $\mathbf{z}$ for simplicity. A first-order expansion of the equations in formula (\ref{bivusys}) yields the linear system of ordinary differential equations
\begin{equation}\label{bivudsys}
\begin{cases}
\dfrac{\partial u_0}{\partial x}=-\dfrac{\lambda}{2c}\,\left[u_1+u_3-2u_0\right]\\[0.75em]
\dfrac{\partial u_1}{\partial y}=-\dfrac{\lambda}{2c}\,\left[u_0+u_2-2u_1\right]\\[0.75em]
\dfrac{\partial u_2}{\partial x}=\dfrac{\lambda}{2c}\,\left[u_1+u_3-2u_2\right]\\[0.75em]
\dfrac{\partial u_3}{\partial y}=\dfrac{\lambda}{2c}\,\left[u_0+u_2-2u_3\right].
\end{cases}\end{equation}
Moreover, the system (\ref{bivudsys}) permits us to recover the partial differential equation which governs the unconditional distribution (\ref{bivu}). By using the standard notation for the Laplace operator, that is $\Delta u=\left(\frac{\partial^2}{\partial x^2}+\frac{\partial^2}{\partial y^2}\right)u$, it can be verified that \begin{equation}\label{extendedlaplacian}\Delta u =\frac{c^2}{\lambda^2}\,\frac{\partial ^4u}{\partial x^2\,\partial y^2}.\end{equation}
We remark that equation (\ref{extendedlaplacian}) is consistent with the standard Laplace equation related to the bivariate Brownian motion. It has been proved by Orsingher and Marchione \cite{vortex} that, in the hydrodynamic limit for $\lambda,c\to+\infty$ with $\frac{\lambda}{c^2}\to1$, the random motion $\big(X(t),Y(t)\big)$ converges in distribution to a bivariate standard Brownian motion with independent components. Thus, it is interesting to observe that the hydrodynamic limit of equation (\ref{extendedlaplacian}) coincides with the Laplace equation $\Delta u=0$, which governs the exit point density for the planar Brownian motion. Hence, equation (\ref{extendedlaplacian}) provides a natural extension of the classical bivariate Laplace equation to the case of finite-velocity random motions with orthogonal directions. Clearly, equation (\ref{extendedlaplacian}) holds regardless of the geometrical structure of the set $\Gamma$. However, the form of its solution does depend on the choice of the domain $\Gamma$, and is therefore difficult to determine in general. To illustrate the principle for finding the solution, we now focus on a specific choice of $\Gamma$, namely an infinite strip.\\

Consider a fixed constant $L>0$ and define the infinite strip
\begin{equation}\label{stripdefinition}\Gamma=\{(x,y)\in\mathbb{R}^2:\;0\le y\le L\}.\end{equation}
Clearly, in the study of the exit distribution of the process $\big(X(t),Y(t)\big)$ from the strip $\Gamma$, it must be taken into account that the exit may occur through either the upper or the lower boundary of the strip. Representative sample paths of the process are illustrated in Figure \ref{fig:sample_paths}. Consequently, one may consider the distributions of the exit locations on both boundaries, as well as the corresponding probabilities of exiting through each boundary and the mean exit time.
\begin{figure}[h!]
\centering
\includegraphics[scale=0.6]{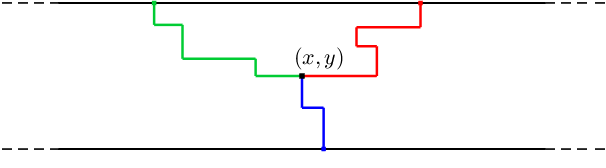}
\caption{Sample paths of the bivariate process $\big(X(t),Y(t)\big)$ with orthogonal directions. The initial position $(x,y)$ is assumed to lie in an horizontal infinite strip. The sample paths of the process and exit the strip through the upper boundary (green and red paths) or through the lower boundary (blue path).}
\label{fig:sample_paths}
\end{figure}
Without loss of generality, we focus our analysis on the exit distribution through the lower boundary of the strip, since the corresponding analysis for the upper boundary is entirely analogous. We start our analysis by investigating the probability density function of the process $\big(X(t),Y(t)\big)$ exiting the set $\Gamma$ through a fixed point $\mathbf{z}\in\partial \Gamma$. We assume that $\mathbf{z}$ belongs to the lower boundary of the strip. In other words, for a fixed $z\in\mathbb{R}$, assume that the exit point admits the representation $\mathbf{z}=(z,0).$ Thus, formulas (\ref{bivu}) and (\ref{bivuj}) can be reformulated respectively as
\begin{equation*}u(x,y\mathord{;}z)\;\mathrm{d}z=\mathbb{P}\Big(X(\tau)\in\mathrm{d}z,\,Y(\tau)=0\,\Big\lvert\,X(0)=x,\,Y(0)=y\Big).\end{equation*}
and, for $j=0,1,2,3$,
\begin{equation}u_j(x,y\mathord{;}z)\;\mathrm{d}z=\mathbb{P}\Big(X(\tau)\in\mathrm{d}z,\,Y(\tau)=0\,\Big\lvert\,X(0)=x,\,Y(0)=y,\,D(0)=d_j\Big).\label{bivuj2}\end{equation}
Under these assumptions, we solve the system (\ref{bivudsys}) by means of a Fourier transform approach. Since the domain of the functions (\ref{bivuj2}) is unbounded in $x$, it seems natural to take the Fourier transform with respect to $x$. Thus, for $j=0,1,2,3$, we define the functions
$$\widetilde{u}_j(\alpha,y\mathord{;}z)=\int_{-\infty}^{\infty}e^{i\alpha x}u_j(x,y\mathord{;}z)\,\mathrm{d}x,\qquad 0<y<L,\;\;\alpha\in\mathbb{R}.$$
Taking the Fourier transform of the equations of the system (\ref{bivudsys}) provides a notable simplification of the problem. The first and third differential equations of the system reduce to the algebraic equations 
\begin{equation}\label{algebrain}
\begin{aligned}
\widetilde{u}_0(\alpha,y\mathord{;}z)=&\frac{\lambda}{2(\lambda+ic\alpha)}\;\big[\widetilde{u}_1(\alpha,y\mathord{;}z)+\widetilde{u}_3(\alpha,y\mathord{;}z)\big]\\
\widetilde{u}_2(\alpha,y\mathord{;}z)=&\frac{\lambda}{2(\lambda-ic\alpha)}\;\big[\widetilde{u}_1(\alpha,y\mathord{;}z)+\widetilde{u}_3(\alpha,y\mathord{;}z)\big].
\end{aligned}
\end{equation}
It is interesting to observe that both identities in equation (\ref{algebrain}) have a clear probabilistic interpretation. To clarify this point, we make some remarks. We recall that the process $\big(X(t),Y(t)\big)$ moves along orthogonal directions, and we are interested in finding the probability density of the process exiting the set $\Gamma$ through the point $\mathbf{z}$. However, in the special case in which $\Gamma$ is an horizontal infinite strip, the process cannot exit from $\Gamma$ while it is moving horizontally, since the set is horizontally unbounded. Thus, in order to exit the strip, the process must change direction and move vertically. For instance, consider the case in which $\big(X(t),Y(t)\big)$ starts moving horizontally with direction $d_0$. Denote by $T_1$ the time at which the first change of direction occurs. Clearly, since the direction changes are paced by a Poisson process, the random variable $T_1$ has exponential distribution with probability density function 
$$f(s)=\lambda\,e^{-\lambda s},\qquad s>0.$$
 Thus, by conditioning on the value of $T_1$, we can write that
\begin{align}u_0(x,y\mathord{;}z)\;\mathrm{d}z=\mathbb{E}\Big[\mathbb{P}\Big(&X(\tau)\in\mathrm{d}z,\,Y(\tau)=0,\nonumber\\
&D(T_1)\in\{d_1,d_3\}\Big\lvert\,X(0)=x,\,Y(0)=y,\,D(0)=d_0,\,T_1\Big)\Big]\label{sonno}
\end{align}
where the expectation is taken with respect to $T_1$. Observe that the event $D(T_1)\in\{d_1,d_3\}$ conditional on $D(0)=d_0$ is a certain event and can be expressed as the union of the incompatible events $D(T_1)=d_1$ and $D(T_1)=d_3$, which both occur with probability $\frac{1}{2}$ conditional on $D(0)=d_0$ . Thus, by using the Markov property of the process $\big(X(t),Y(t),D(t)\big)$, formula (\ref{sonno}) can be expressed as
\begin{equation}u_0(x,y\mathord{;}z)=\frac{1}{2}\mathbb{E}\Big[u_1(x+cT_1,y\mathord{;}z)+u_3(x+cT_1,y\mathord{;}z)\Big]\label{ltprobint}\end{equation}
By taking the Fourier transform of both sides of equation (\ref{ltprobint}) and writing the expected value in integral form, we obtain that
\begin{align}\widetilde{u}_0(\alpha,y\mathord{;}z)=&\frac{\lambda}{2}\int_{-\infty}^{+\infty}\int_0^{+\infty}e^{i\alpha x-\lambda s}\Big[u_1(x+cs,y\mathord{;}z)+u_3(x+cs,y\mathord{;}z)\Big]\,\mathrm{d}s\,\mathrm{d}x\nonumber\\
=&\frac{\lambda}{2}\int_{-\infty}^{+\infty}\int_0^{+\infty}e^{i\alpha w-(i\alpha c+\lambda) s}\Big[u_1(w,y\mathord{;}z)+u_3(w,y\mathord{;}z)\Big]\,\mathrm{d}s\,\mathrm{d}w\nonumber\\
=&\frac{1}{2}\,\mathbb{E}\left[e^{-i\alpha c T_1}\right]\Big[\widetilde{u}_1(\alpha,y\mathord{;}z)+\widetilde{u}_3(\alpha,y\mathord{;}z)\Big].\label{natale}\end{align}
By recalling the characteristic function of the exponential distribution $$\mathbb{E}\left[e^{-i\alpha c T_1}\right]=\frac{\lambda}{\lambda+ic\alpha}$$
formula (\ref{natale}) immediately provides a probabilistic interpretation to the first identity of equation (\ref{algebrain}). Clearly, a similar interpretation holds for the second identity.\\
Resuming the analysis of the system (\ref{bivudsys}), taking the Fourier transform of the second and fourth equations yields
\begin{equation}\label{bivFouriersys}
\begin{cases}
\dfrac{\partial \widetilde{u}_1}{\partial y}=-\dfrac{\lambda}{2c}\,\left[\widetilde{u}_0+\widetilde{u}_2-2\widetilde{u}_1\right]\\[0.75em]
\dfrac{\partial \widetilde{u}_3}{\partial y}=\dfrac{\lambda}{2c}\,\left[\widetilde{u}_0+\widetilde{u}_2-2\widetilde{u}_3\right].
\end{cases}
\end{equation}
By substituting now the identities in equation (\ref{algebrain}) into the system (\ref{bivFouriersys}) we obtain that
\begin{equation}\label{bivFouriersys2}
\begin{cases}
\dfrac{\partial \widetilde{u}_1}{\partial y}=\dfrac{\lambda^3+2\lambda c^2\alpha^2}{2\,c\,(\lambda^2+c^2\alpha^2)}\;\widetilde{u}_1-\dfrac{\lambda^3}{2\,c\,(\lambda^2+c^2\alpha^2)}\;\widetilde{u}_3\\[0.85em]
\dfrac{\partial \widetilde{u}_3}{\partial y}=\dfrac{\lambda^3}{2\,c\,(\lambda^2+c^2\alpha^2)}\;\widetilde{u}_1-\dfrac{\lambda^3+2\lambda c^2\alpha^2}{2\,c\,(\lambda^2+c^2\alpha^2)}\;\widetilde{u}_3.
\end{cases}
\end{equation}
Formula (\ref{bivFouriersys2}) shows that the Fourier transform approach reduces the fourth-order problem (\ref{bivudsys}) to a second-order problem. However, to solve the system (\ref{bivFouriersys2}), we need to impose suitable boundary conditions. To this aim, it is sufficient to take into account the probabilistic interpretation of the problem. Assume that, at time $t=0$, the starting point of the process $\big(X(t),Y(t)\big)$ lies on the upper boundary of the infinite strip $\Gamma$ defined in equation (\ref{stripdefinition}). In other words, assume that the initial position has ordinate $y=L$. If the process starts moving upwards, that is if $D(0)=d_1$, it immediately exits the strip from the upper boundary. Hence, if $y=L$ and $D(0)=d_1$, the probability of $\big(X(t),Y(t)\big)$ exiting $\Gamma$ from below is zero, which implies that \begin{equation}u_1(x,L\mathord{;}z)=0,\qquad \forall x\in\mathbb{R}.\label{ubouu}\end{equation}
Similarly, if the process $\big(X(t),Y(t)\big)$ has initial position on the lower boundary of $\Gamma$ and initial direction $D(0)=d_3$, it immediately exits the strip $\Gamma$ through the lower boundary. In particular, the probability density of exiting through the point $(z,0)$ given that the initial position is $(x,0)$ is
\begin{equation}\label{uboud}u_3(x,0\mathord{;}z)=\delta(z-x),\qquad\forall x\in\mathbb{R}.\end{equation}
By taking the Fourier transform of formulas (\ref{ubouu}) and (\ref{uboud}), we have that
\begin{equation}\label{stripboundarycon}\widetilde{u}_1(\alpha,L\mathord{;}z)=0,\qquad \widetilde{u}_3(\alpha,0\mathord{;}z)=e^{i\alpha z}.\end{equation}
We are now able to prove the following result.

\begin{thm}\label{tildeuthm}
For $j=1,3$, the Fourier transforms $\widetilde{u}_j(\alpha,y\mathord{;}z)$ read
\begin{equation}\label{uhat1solu}\widetilde{u}_1(\alpha,y\mathord{;}z)=\frac{2\lambda^2\sinh\left(\frac{\lvert\alpha\lvert\lambda(L-y)}{\sqrt{\lambda^2+\alpha^2c^2}}\right)e^{i\alpha z}}{\big(\sqrt{\lambda^2+\alpha^2c^2}+\lvert\alpha\lvert c\big)^2\;e^{\frac{\lvert\alpha\lvert\,\lambda\,L}{\sqrt{\lambda^2+\alpha^2c^2}}}-\big(\sqrt{\lambda^2+\alpha^2c^2}-\lvert\alpha\lvert c\big)^2\;e^{-\frac{\lvert\alpha\lvert\,\lambda\,L}{\sqrt{\lambda^2+\alpha^2c^2}}}}\end{equation}
and
\begin{align}\widetilde{u}_3(\alpha,&\,y\mathord{;}z)=\nonumber\\
&e^{i\alpha z}\,\frac{\big(\sqrt{\lambda^2+\alpha^2c^2}+\lvert\alpha\lvert c\big)^2\;e^{\frac{\lvert\alpha\lvert\,\lambda\,(L-y)}{\sqrt{\lambda^2+\alpha^2c^2}}}-\big(\sqrt{\lambda^2+\alpha^2c^2}-\lvert\alpha\lvert c\big)^2\;e^{-\frac{\lvert\alpha\lvert\,\lambda\,(L-y)}{\sqrt{\lambda^2+\alpha^2c^2}}}}{\big(\sqrt{\lambda^2+\alpha^2c^2}+\lvert\alpha\lvert c\big)^2\;e^{\frac{\lvert\alpha\lvert\,\lambda\,L}{\sqrt{\lambda^2+\alpha^2c^2}}}-\big(\sqrt{\lambda^2+\alpha^2c^2}-\lvert\alpha\lvert c\big)^2\;e^{-\frac{\lvert\alpha\lvert\,\lambda\,L}{\sqrt{\lambda^2+\alpha^2c^2}}}}.\label{uhat3solu}\end{align}
For $j=0,2$, the Fourier transforms $\widetilde{u}_0(\alpha,y\mathord{;}z)$ and $\widetilde{u}_2(\alpha,y\mathord{;}z)$ follow immediately by the identities in equation (\ref{algebrain}).\end{thm}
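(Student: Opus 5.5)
The plan is to solve the constant-coefficient linear system (\ref{bivFouriersys2}) in $y$, for fixed $\alpha$, subject to the two-point boundary conditions (\ref{stripboundarycon}), following the same strategy as Theorems \ref{thm1} and \ref{thm:fring}. Write the system as $\partial_y(\widetilde u_1,\widetilde u_3)^T=B(\widetilde u_1,\widetilde u_3)^T$ with
$$B=\frac{\lambda}{2c(\lambda^2+c^2\alpha^2)}\begin{pmatrix}\lambda^2+2c^2\alpha^2 & -\lambda^2\\ \lambda^2 & -(\lambda^2+2c^2\alpha^2)\end{pmatrix}.$$
The trace of $B$ vanishes, and its determinant is
$$\frac{\lambda^2}{4c^2(\lambda^2+c^2\alpha^2)^2}\Big[\lambda^4-(\lambda^2+2c^2\alpha^2)^2\Big]=-\frac{\lambda^2\alpha^2}{\lambda^2+c^2\alpha^2}.$$
Hence the eigenvalues are $\pm\mu$ with
$$\mu=\frac{|\alpha|\lambda}{\sqrt{\lambda^2+\alpha^2c^2}},$$
which is exactly the exponent appearing in (\ref{uhat1solu})–(\ref{uhat3solu}). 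Since $B^2=\mu^2 I$, the matrix exponential is
$$e^{By}=\cosh(\mu y)\,I+\frac{\sinh(\mu y)}{\mu}\,B.$$
Alternatively, write the general solution as $\widetilde u_1=A_+e^{\mu y}+A_-e^{-\mu y}$ and read $\widetilde u_3$ off the first equation. The case $\alpha=0$ is degenerate, since then $\mu=0$; it is best treated separately or recovered by continuity.

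Next, compute the eigenvectors. From the first row, an eigenvector for $\pm\mu$ has
$$\frac{\widetilde u_3}{\widetilde u_1}=\frac{\lambda^2+2c^2\alpha^2\mp 2c\mu(\lambda^2+c^2\alpha^2)/\lambda}{\lambda^2}.$$
Setting $s=\sqrt{\lambda^2+\alpha^2c^2}$, we have $2c\mu(\lambda^2+c^2\alpha^2)/\lambda=2c|\alpha|s$. The ratio then becomes
$$\frac{s^2+c^2\alpha^2\mp 2c|\alpha|s}{\lambda^2}=\frac{(s\mp|\alpha|c)^2}{\lambda^2}.$$
This is the source of the squares $(s\pm|\alpha|c)^2$ in the statement. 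It gives
$$\widetilde u_1=A_+e^{\mu y}+A_-e^{-\mu y},\qquad \widetilde u_3=\lambda^{-2}\big[A_+(s-|\alpha|c)^2e^{\mu y}+A_-(s+|\alpha|c)^2e^{-\mu y}\big].$$

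Impose the boundary conditions. The condition $\widetilde u_1(L)=0$ forces $A_+e^{\mu L}=-A_-e^{-\mu L}$, so $\widetilde u_1=2K\sinh(\mu(L-y))$ for a single constant $K$. Substituting,
$$\widetilde u_3=\lambda^{-2}K\big[(s+|\alpha|c)^2e^{\mu(L-y)}-(s-|\alpha|c)^2e^{-\mu(L-y)}\big].$$
The second condition, $\widetilde u_3(0)=e^{i\alpha z}$, then determines $K=\lambda^2e^{i\alpha z}/\mathcal D$, where $\mathcal D$ is the denominator in (\ref{uhat1solu}). This yields both formulas at once.

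The main obstacle is purely algebraic: identifying the eigenvector ratios as perfect squares, and tracking the signs so that the $\pm$ pairing matches the statement. To guard against errors, I would check the result at two points. First, at $y=0$ the expression gives $\widetilde u_3=e^{i\alpha z}$. Second, at $\alpha=0$ (using $\sinh(\mu(L-y))/\mu\to L-y$ and the limit of $\mathcal D/\mu$) the values should reproduce the exit probabilities of the corresponding one-dimensional problem, i.e. total mass consistent with (\ref{introdu:p}). The remark about $\widetilde u_0,\widetilde u_2$ follows directly by substituting into (\ref{algebrain}).
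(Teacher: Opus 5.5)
Your proposal is correct and is essentially the paper's proof. You diagonalize the same $2\times 2$ coefficient matrix, obtaining eigenvalues $\pm|\alpha|\lambda/\sqrt{\lambda^2+\alpha^2c^2}$ and eigenvector ratios $(s\mp|\alpha|c)^2/\lambda^2$ (equivalent to the paper's $\mathbf{v}_0,\mathbf{v}_1$), and fix the constants from $\widetilde{u}_1(\alpha,L;z)=0$ and $\widetilde{u}_3(\alpha,0;z)=e^{i\alpha z}$; imposing the condition at $y=L$ first, so that a single constant multiplies $\sinh(\mu(L-y))$, is only tidier bookkeeping than the paper's explicit computation of $k_0,k_1$, and your $\alpha\to 0$ check does recover $p_1,p_3$ from Theorem \ref{thm:p}.
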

\begin{proof}
We start by expressing the system (\ref{bivFouriersys2}) in matrix form. With the notation
$$A=\begin{pmatrix}
\dfrac{\lambda^3+2\lambda c^2\alpha^2}{2\,c\,(\lambda^2+c^2\alpha^2)}\; & \; -\dfrac{\lambda^3}{2\,c\,(\lambda^2+c^2\alpha^2)}\\
\dfrac{\lambda^3}{2\,c\,(\lambda^2+c^2\alpha^2)}\; & \; -\dfrac{\lambda^3+2\lambda c^2\alpha^2}{2\,c\,(\lambda^2+c^2\alpha^2)}
\end{pmatrix}$$
we can write that
\begin{equation}\label{smallvi}\frac{\partial}{\partial y}\begin{pmatrix}\widetilde{u}_1(\alpha,y\mathord{;}z)\\[0.5em] \widetilde{u}_3(\alpha,y\mathord{;}z)\end{pmatrix}=A\cdot\begin{pmatrix}\widetilde{u}_1(\alpha,y\mathord{;}z)\\[0.5em] \widetilde{u}_3(\alpha,y\mathord{;}z)\end{pmatrix}.\end{equation}
By using standard theory of linear systems of ordinary differential equations, the general solution to the system (\ref{smallvi}) can be represented in terms of the eigenvalues and eigenvectors of the matrix $A$. In particular, denote by $\theta_j,\;j=0,1,$ the eigenvalues of $A$, and let $\mathbf{v}_j,\;j=0,1,$ be the corresponding eigenvectors. We can write that 
$$\begin{pmatrix}\widetilde{u}_1(\alpha,y\mathord{;}z)\\[0.5em] \widetilde{u}_3(\alpha,y\mathord{;}z)\end{pmatrix}=\sum_{j=0}^1k_j\,e^{\theta_j y}\mathbf{v}_j$$ where $k_j,\;j=0,1$, are arbitrary constants. Straightforward calculations permit us to prove that
$$\theta_0=\frac{\lvert\alpha\lvert\,\lambda}{\sqrt{\lambda^2+\alpha^2c^2}},\qquad \theta_1=-\frac{\lvert\alpha\lvert\,\lambda}{\sqrt{\lambda^2+\alpha^2c^2}}.$$ The corresponding eigenvectors are
$$\mathbf{v}_0=\begin{pmatrix}\lvert\alpha\lvert c+\sqrt{\lambda^2+\alpha^2c^2}\\[0.75em]-\lvert\alpha\lvert c+\sqrt{\lambda^2+\alpha^2c^2}\end{pmatrix},\qquad \mathbf{v}_1=\begin{pmatrix}\lvert\alpha\lvert c-\sqrt{\lambda^2+\alpha^2c^2}\\[0.75em]-\lvert\alpha\lvert c-\sqrt{\lambda^2+\alpha^2c^2}\end{pmatrix}.$$
Thus, the general solution to the system (\ref{bivFouriersys2}) reads
$$\widetilde{u}_1(\alpha,y\mathord{;}z)=k_0\,(\sqrt{\lambda^2+\alpha^2c^2}+\lvert\alpha\lvert c)\;e^{\frac{\lvert\alpha\lvert\,\lambda\,y}{\sqrt{\lambda^2+\alpha^2c^2}}}-k_1\,(\sqrt{\lambda^2+\alpha^2c^2}-\lvert\alpha\lvert c)\;e^{-\frac{\lvert\alpha\lvert\,\lambda\,y}{\sqrt{\lambda^2+\alpha^2c^2}}}$$
and
$$\widetilde{u}_3(\alpha,y\mathord{;}z)=k_0\,(\sqrt{\lambda^2+\alpha^2c^2}-\lvert\alpha\lvert c)\;e^{\frac{\lvert\alpha\lvert\,\lambda\,y}{\sqrt{\lambda^2+\alpha^2c^2}}}-k_1\,(\sqrt{\lambda^2+\alpha^2c^2}+\lvert\alpha\lvert c)\;e^{-\frac{\lvert\alpha\lvert\,\lambda\,y}{\sqrt{\lambda^2+\alpha^2c^2}}}$$
By using now the boundary conditions (\ref{stripboundarycon}), it can be verified that
$$k_0=\frac{\big(\sqrt{\lambda^2+\alpha^2c^2}-\lvert\alpha\lvert c\big)\;e^{i\alpha z-\frac{\lvert\alpha\lvert\lambda L}{\sqrt{\lambda^2+\alpha^2c^2}}}}{\big(\sqrt{\lambda^2+\alpha^2c^2}-\lvert\alpha\lvert c\big)^2\;e^{-\frac{\lvert\alpha\lvert\,\lambda\,L}{\sqrt{\lambda^2+\alpha^2c^2}}}-\big(\sqrt{\lambda^2+\alpha^2c^2}+\lvert\alpha\lvert c\big)^2\;e^{\frac{\lvert\alpha\lvert\,\lambda\,L}{\sqrt{\lambda^2+\alpha^2c^2}}}}$$
and
$$k_1=\frac{\big(\sqrt{\lambda^2+\alpha^2c^2}+\lvert\alpha\lvert c\big)\;e^{i\alpha z+\frac{\lvert\alpha\lvert\lambda L}{\sqrt{\lambda^2+\alpha^2c^2}}}}{\big(\sqrt{\lambda^2+\alpha^2c^2}-\lvert\alpha\lvert c\big)^2\;e^{-\frac{\lvert\alpha\lvert\,\lambda\,L}{\sqrt{\lambda^2+\alpha^2c^2}}}-\big(\sqrt{\lambda^2+\alpha^2c^2}+\lvert\alpha\lvert c\big)^2\;e^{\frac{\lvert\alpha\lvert\,\lambda\,L}{\sqrt{\lambda^2+\alpha^2c^2}}}}.$$
Formulas (\ref{uhat1solu}) and (\ref{uhat3solu}) follow immediately.
\end{proof}
We now make some remarks. In principle, the Fourier transforms obtained in Theorem \ref{tildeuthm} can be inverted in order to find the exact distribution of the exit point of the process $\big(X(t),Y(t)\big)$ through the lower boundary of the strip $\Gamma$. In particular, for $j=0,1,2,3$, the probability density functions (\ref{bivuj2}) can be represented in the form
\begin{equation}u_j(x,y\mathord{;}z)=\frac{1}{2\pi}\int_{-\infty}^{\infty}e^{-i\alpha x}\;\widetilde{u}_j(\alpha,y\mathord{;}z)\,\mathrm{d}\alpha,\qquad 0<y<L,\;\;x\in\mathbb{R}.\label{smville}\end{equation}
However, the special case $j=3$, that is the case in which the process starts moving downwards at $t=0$, deserves a specific analysis, as the Fourier integral representation is subtle in this case. In particular, since the Fourier transform (\ref{uhat3solu}) does not vanish for $\lvert\alpha\lvert\to+\infty$, the integral (\ref{smville}) is not convergent in the usual sense for $j=3$. This fact has a fundamental probabilistic interpretation. In particular, as we will now prove, the distribution of the exit point from the strip $L$ has a singular component.\\
\noindent To study the convergence of the inverse Fourier transform (\ref{smville}), we examine the asymptotic behaviour of the functions $\widetilde{u}_j(\alpha,\,y\mathord{;}z),\;j=0,1,2,3,$ for $\lvert\alpha\lvert\to+\infty$. In view of formulas (\ref{uhat1solu}) and (\ref{uhat3solu}), it is immediate to verify that, for large values of $\lvert\alpha\lvert$, the following asymptotic behaviour holds:
$$\widetilde{u}_1(\alpha,\,y\mathord{;}z)\sim\frac{\lambda^2\sinh\left(\frac{\lambda\,(L-y)}{c}\right)\,e^{i\alpha z}}{2\alpha^2 c^2 e^{\frac{\lambda L}{c}}}$$
and
\begin{equation}\widetilde{u}_3(\alpha,\,y\mathord{;}z)\sim e^{-\frac{\lambda y}{c}+i\alpha z}.\label{asym3}\end{equation}
Moreover, equation (\ref{algebrain}) implies that, for large $\lvert\alpha\lvert$,
$$\widetilde{u}_0(\alpha,\,y\mathord{;}z)\sim\frac{\lambda}{2(\lambda+ic\alpha)}\left[\frac{\lambda^2\sinh\left(\frac{\lambda\,(L-y)}{c}\right)}{2\alpha^2 c^2 e^{\frac{\lambda L}{c}}}+e^{-\frac{\lambda y}{c}}\right]\,e^{i\alpha z}$$
$$\widetilde{u}_2(\alpha,\,y\mathord{;}z)\sim\frac{\lambda}{2(\lambda-ic\alpha)}\left[\frac{\lambda^2\sinh\left(\frac{\lambda\,(L-y)}{c}\right)}{2\alpha^2 c^2 e^{\frac{\lambda L}{c}}}+e^{-\frac{\lambda y}{c}}\right]\,e^{i\alpha z}.$$
Thus, it is clear that, for $j=0,1,2$, the integral in formula (\ref{smville}) is well-defined, since the integrand functions decay sufficiently rapidly at infinity. In particular, for $j=0,2$, the convergence of the integral can be proved by using the Dirichlet's test for improper integrals. As for the case $j=3$, the asymptotic behaviour of $\widetilde{u}_3(\alpha,\,y\mathord{;}z)$ clearly shows that such integral does not converge in the usual sense. However, the inverse Fourier transform is meaningful from the probabilistic point of view and can be interpreted in the distributional sense. To clarify this point, we make some crucial remarks.\\
Assume that, at time $t=0$, the process $\big(X(t),Y(t)\big)$ lies at the point $(x,y)$ with $0<y<L$, and starts moving downwards with direction $d_3$. For a fixed point $z\in\mathbb{R}$, we recall that the function $u_3(x,y\mathord{;}z)$ represents the probability density of exiting the strip $\Gamma$ through the point $(z,0)$. Consider now the special case in which $z=x$, that is the case in which the starting point is precisely above the exit point. In this case, it can be verified that the process $\big(X(t),Y(t)\big)$ exits the strip through $(z,0)$ with a strictly positive probability. In particular, recalling the notation (\ref{bsg}) for the first exit time $\tau$, if $z=x$ and the initial direction is $d_3$, the event $X(\tau)=z$ occurs if no changes of direction occur for a sufficiently long time. Specifically, the time needed for the process to reach the lower boundary of the strip is $t^*=\frac{y}{c}$. Hence, the probability of no changes of direction occurring up to time $t^*$ is $$\mathbb{P}\left(N(t^*)=0\right)=e^{-\frac{\lambda\,y}{c}}.$$
In view of the discussion above, it holds that \begin{equation}\mathbb{P}\left(X(\tau)=x\big\lvert X(0)=x,\; Y(0)=y,\;D(0)=d_3\right)=e^{-\frac{\lambda\,y}{c}}.\label{spve}\end{equation}
Thus, the exit point distribution $u_3(x,y\mathord{;}z)$ exhibits a singular component in the point $z=x$, that is in the point of the boundary having the same abscissa as the starting point. On the other hand, if at least a change of direction occurs, the distribution of the exit point of $\big(X(t),Y(t)\big)$ from $\Gamma$ is continuous, and spreads over the whole unbounded boundary of the strip. We denote by $u_3^*(x,y\mathord{;}z)$ this continuous component of the distribution, that is
\begin{align}u_3^*(x,y\mathord{;}z)\;\mathrm{d}z=\mathbb{P}\Big(X(\tau)\in\mathrm{d}z,\,Y(\tau)=0\,&\Big\lvert X(0)=x,\,Y(0)=y,\nonumber\\&D(0)=d_3,\,N(t^*)>0\Big).\nonumber\end{align}
By taking into account formula (\ref{spve}), it is clear that the exit point distribution and its continuous component are related by the equation
\begin{equation}u_3(x,y\mathord{;}z)=e^{-\frac{\lambda\,y}{c}}\,\delta\big(z-x\big)+\left(1-e^{-\frac{\lambda\,y}{c}}\right) u_3^*(x,y\mathord{;}z).\label{singularexplicit}\end{equation}
We now define the Fourier transform
$$\widetilde{u}_3^*(\alpha,\,y\mathord{;}z)=\int_{-\infty}^{+\infty}e^{i\alpha x}\;u_3^*(x,y\mathord{;}z)\,\mathrm{d}x,\qquad 0<y<L,\;\;\alpha\in\mathbb{R}.$$
By taking the Fourier transform of both sides of equation (\ref{singularexplicit}), we obtain that
\begin{equation}\widetilde{u}_3(x,y\mathord{;}z)=e^{-\frac{\lambda\,y}{c}+i\alpha z}+\left(1-e^{-\frac{\lambda\,y}{c}}\right) \widetilde{u}_3^*(x,y\mathord{;}z).\label{singularexplicitfourier}\end{equation}
We now claim that the asymptotic behaviour (\ref{asym3}) of the function $\widetilde{u}_3(x,y\mathord{;}z)$ follows from the fact that the singular component of the distribution is included in $\widetilde{u}_3(x,y\mathord{;}z)$. Hence, for $j=3$, the inverse Fourier transform must be interpreted in the distributional sense. However, a rigorous representation of the inverse Fourier transform can be obtained by restricting the analysis to the continuous component of the exit point distribution. In particular, formula (\ref{singularexplicitfourier}) implies that
\begin{equation}\widetilde{u}_3^*(\alpha,y\mathord{;}z)=\frac{\widetilde{u}_3(\alpha,y\mathord{;}z)-e^{-\frac{\lambda\,y}{c}+i\alpha z}}{1-e^{-\frac{\lambda\,y}{c}}}.\label{singularexplicitfouriercontinuous}\end{equation}
where the explicit expression of  $\widetilde{u}_3(\alpha,\,y)$ is given by (\ref{uhat3solu}). It is now clear that the inverse Fourier transform 
\begin{equation}\label{ustarift}u_3^*(x,y\mathord{;}z)=\frac{1}{2\pi}\int_{-\infty}^{\infty}e^{-i\alpha x}\;\widetilde{u}_3^*(\alpha,y\mathord{;}z)\,\mathrm{d}\alpha,\qquad 0<y<L,\;\;x\in\mathbb{R}\end{equation} is well-defined, as the integral is convergent. In particular, the convergence can be proved by employing the Dirichlet's test.\\
\noindent The inverse Fourier transforms (\ref{smville}) and (\ref{ustarift}) do not appear to admit a representation in terms of elementary functions. Nevertheless, in view on the results presented so far, an integral representation is immediately obtained. In particular, by substituting formulas  (\ref{uhat1solu}) and (\ref{uhat3solu}) into (\ref{smville}) and (\ref{ustarift}), and performing the change of variables $w=\frac{\alpha\,c}{\sqrt{\lambda^2+\alpha^2c^2}}$, we can write that

\begin{equation}u_1(x,y\mathord{;}z)=\frac{2\lambda}{c\,\pi}\int_0^1\frac{\cos\left(\frac{(z-x)\,\lambda\,w}{c\,\sqrt{1-w^2}}\right)\;\sinh\left(\frac{\lambda (L-y)}{c}\,w\right)}{(1+w)^2\,e^{\frac{\lambda L}{c}\,w}-(1-w)^2\,e^{-\frac{\lambda L}{c}\,w}}\,\frac{dw}{\sqrt{1-w^2}}\label{inegrep}\end{equation}
and
\begin{align}
u_3(x&,y\mathord{;}z)=e^{-\frac{\lambda\,y}{c}}\,\delta\big(z-x\big)+\nonumber\\
&+\frac{\lambda}{c\,\pi}\int_0^1\cos\left(\frac{(z-x)\,\lambda\,w}{c\,\sqrt{1-w^2}}\right)\cdot\nonumber\\
&\qquad\cdot\left[\frac{(1+w)^2\,e^{\frac{\lambda (L-y)}{c}\,w}-(1-w)^2\,e^{-\frac{\lambda (L-y)}{c}\,w}}{(1+w)^2\,e^{\frac{\lambda L}{c}\,w}-(1-w)^2\,e^{-\frac{\lambda L}{c}\,w}}-e^{-\frac{\lambda y}{c}}\right]\,\frac{dw}{(1-w^2)^{3/2}}\nonumber
\end{align}
where the continuous component (\ref{singularexplicitfouriercontinuous}) of $u_3(x,y\mathord{;}z)$ reads
\begin{align}
u^*_3(x&,y\mathord{;}z)\nonumber\\
&=\frac{\lambda}{c\,\pi\left(1-e^{-\frac{\lambda\,y}{c}}\right)}\int_0^1\cos\left(\frac{(z-x)\,\lambda\,w}{c\,\sqrt{1-w^2}}\right)\cdot\nonumber\\
&\qquad\cdot\left[\frac{(1+w)^2\,e^{\frac{\lambda (L-y)}{c}\,w}-(1-w)^2\,e^{-\frac{\lambda (L-y)}{c}\,w}}{(1+w)^2\,e^{\frac{\lambda L}{c}\,w}-(1-w)^2\,e^{-\frac{\lambda L}{c}\,w}}-e^{-\frac{\lambda y}{c}}\right]\,\frac{dw}{(1-w^2)^{3/2}}\nonumber
\end{align}
Moreover, by taking into account formula (\ref{algebrain}), we have that
\begin{align}
u_0(x,y\mathord{;}z)=\frac{\lambda}{c\,\pi}\int_0^1&\frac{\sqrt{1-w^2}\,\cos\left(\frac{(z-x)\,\lambda\,w}{c\,\sqrt{1-w^2}}\right)+w\,\sin\left(\frac{(z-x)\,\lambda\,w}{c\,\sqrt{1-w^2}}\right)}{1-w^2}\cdot\nonumber\\
&\qquad\cdot\frac{(1+w)\,e^{\frac{\lambda (L-y)}{c}\,w}-(1-w)\,e^{-\frac{\lambda (L-y)}{c}\,w}}{(1+w)^2\,e^{\frac{\lambda L}{c}\,w}-(1-w)^2\,e^{-\frac{\lambda L}{c}\,w}}\;dw\nonumber
\end{align}
and
\begin{align}
u_2(x,y\mathord{;}z)=\frac{\lambda}{c\,\pi}\int_0^1&\frac{\sqrt{1-w^2}\,\cos\left(\frac{(z-x)\,\lambda\,w}{c\,\sqrt{1-w^2}}\right)-w\,\sin\left(\frac{(z-x)\,\lambda\,w}{c\,\sqrt{1-w^2}}\right)}{1-w^2}\cdot\nonumber\\
&\qquad\cdot\frac{(1+w)\,e^{\frac{\lambda (L-y)}{c}\,w}-(1-w)\,e^{-\frac{\lambda (L-y)}{c}\,w}}{(1+w)^2\,e^{\frac{\lambda L}{c}\,w}-(1-w)^2\,e^{-\frac{\lambda L}{c}\,w}}\;dw\nonumber
\end{align}
\noindent The formulas above provide explicit integral representations for the probability density functions $u_j(x,y\mathord{;}z),\; j=0,1,2,3$, of the exit point $z$ on the lower boundary of the strip $\Gamma$. Although such integrals do not seem to be solvable analytically, numerical integration can be performed in order to compute the density functions. In this way, we are able to obtain the graphical representation of the densities shown in Figure \ref{fig:densities}, which offers an insightful view of their behaviour.
\begin{figure}[h]\label{fig:densities}
\centering
\includegraphics[scale=0.265]{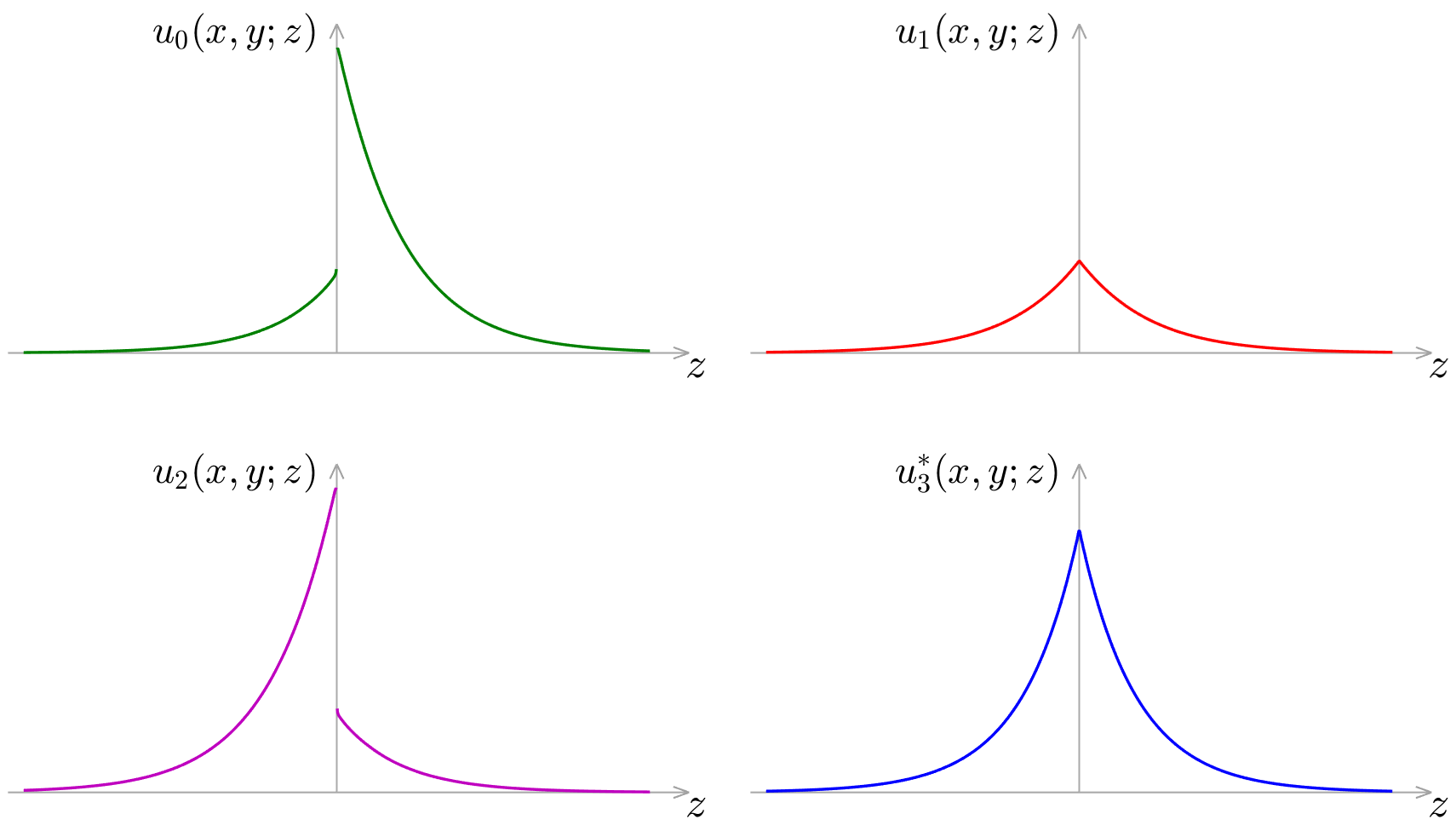}
\caption{Graphical representation of the density functions $u_j(x,y\mathord{;}z),\,j=0,1,2$, and $u_3^*(x,y\mathord{;}z)$. The density functions have been computed by numerically inverting the Fourier transforms obtained in Theorem \ref{tildeuthm} for $x=0,\;y=\frac{1}{2},\;L=1,\;c=5,\;\lambda=10$. When the process $\big(X(t),Y(t)\big)$ starts moving vertically at time $t=0$, the distribution of the exit point $z$ is symmetric around the abscissa of the starting point, as suggested by the densities $u_1(x,y\mathord{;}z)$ and $u_3^*(x,y\mathord{;}z)$. When the particle starts moving downwards with direction $d_3$, the probability of exiting the strip from the lower boundary is higher compared to the case in which the initial motion is upwards. For this reason, the integral of density $u_3^*(x,y\mathord{;}z)$ is higher than that of $u_1(x,y\mathord{;}z)$, as we will also later confirm numerically in the paper. When the process $\big(X(t),Y(t)\big)$ starts moving horizontally at time $t=0$, the density of the exit point is asymmetric and exhibits a jump discontinuity at the abscissa of the starting. In particular, if the initial direction is leftwards (rightwards) the probability of exiting the strip to the left (right) of the initial abscissa is greater.}
\end{figure}

Having determined the exact distribution of the exit point of the process $\big(X(t),Y(t)\big)$ from the lower boundary of the infinite strip $\Gamma$, we are now interested in calculating the probability of the process exiting the strip through the lower boundary. For this purpose, we define, for $j=0,1,2,3$ and $(x,y)\in\Gamma$, the functions \begin{equation}\label{nikol}p_j(x,y)=\mathbb{P}\Big(Y(\tau)=0\;\Big\lvert X(0)=x,\,Y(0)=y,\,D(0)=d_j\Big).\end{equation} Similarly, we define the overall probability of exiting though the lower boundary as 
$$p(x,y)=\mathbb{P}\Big(Y(\tau)=0\;\Big\lvert X(0)=x,\,Y(0)=y\Big).$$ Clearly, it holds that $p(x,z)=\frac{1}{4}\sum_{j=0}^3p_j(x,y)$. By means of standard methods, it can be verified that 
\begin{equation}\label{IMErc}
\begin{cases}
p_0(x,y)=p_0(x+c\,\Delta t,y)\,(1-\lambda\,\Delta t)+p_1(x,y)\,\frac{\lambda\,\Delta t}{2}+p_3(x,y)\,\frac{\lambda\,\Delta t}{2} + o(\Delta t)\\
p_1(x,y)=p_1(x,y+c\,\Delta t)\,(1-\lambda\,\Delta t)+p_0(x,y)\,\frac{\lambda\,\Delta t}{2}+p_2(x,y)\,\frac{\lambda\,\Delta t}{2} + o(\Delta t)\\
p_2(x,y)=p_2(x-c\,\Delta t,y)\,(1-\lambda\,\Delta t)+p_1(x,y)\,\frac{\lambda\,\Delta t}{2}+p_3(x,y)\,\frac{\lambda\,\Delta t}{2} + o(\Delta t)\\
p_3(x,y)=p_3(x,y-c\,\Delta t)\,(1-\lambda\,\Delta t)+p_0(x,y)\,\frac{\lambda\,\Delta t}{2}+p_2(x,y)\,\frac{\lambda\,\Delta t}{2} + o(\Delta t).
\end{cases}\end{equation}
Therefore, by taking a first-order Taylor expansion of the system (\ref{IMErc}), we can write that
\begin{equation}\label{bivpdsys}
\begin{cases}
\dfrac{\partial p_0}{\partial x}=-\dfrac{\lambda}{2c}\,\left[p_1+p_3-2p_0\right]\\[0.75em]
\dfrac{\partial p_1}{\partial y}=-\dfrac{\lambda}{2c}\,\left[p_0+p_2-2p_1\right]\\[0.75em]
\dfrac{\partial p_2}{\partial x}=\dfrac{\lambda}{2c}\,\left[p_1+p_3-2p_2\right]\\[0.75em]
\dfrac{\partial p_3}{\partial y}=\dfrac{\lambda}{2c}\,\left[p_0+p_2-2p_3\right].
\end{cases}\end{equation}
It is interesting to observe that the system (\ref{bivpdsys}) can also be obtained by using formula (\ref{bivudsys}) and taking into account that \begin{equation}p_j(x,y)=\int_{-\infty}^{+\infty}u_j(x,y\mathord{;}x).\label{doubleintegral}\end{equation}
Observe that, in principle, formula (\ref{doubleintegral}) provides an immediate representation for the probabilities $p_j(x,y),\;j=0,1,2,3$. However, taking into account that we have obtained a representation for the densities $u_j(x,y),\;j=0,1,2,3$ in integral form, it turns out that formula (\ref{doubleintegral}) yields the desired probabilities in the form of double integrals. The reader can readily verify that such double integrals are analytically intractable. Interestingly, by employing the system (\ref{bivpdsys}), we are able to obtain a representation for $p_j(x,y),\;j=0,1,2,3$, in an elementary form. Moreover, by numerically computing the double integral (\ref{doubleintegral}), we shall prove that such elementary form is consistent with the Fourier transforms presented in Theorem \ref{tildeuthm}.\\
In order to solve the system (\ref{bivpdsys}), we exploit a simplification which emerges as a consequence of the geometrical structure of the infinite strip $\Gamma$. Indeed, since the strip is horizontally unbounded, one can move the starting point horizontally while leaving the geometrical structure of the problem unaltered. In other words, the fact that $\Gamma$ is an infinite horizontal strip permits us to conclude that the functions $p_j(x,y),\;j=0,1,2,3$, do not depend on $x$. This fact follows trivially from the fact that the horizontal strip $\Gamma$ is invariant under horizontal translations. For a formal proof, one needs to consider the horizontally translated process $\big(\widetilde{X}(t), Y(t)\big)$ with $\widetilde{X}(t):=X(t)-x$. Clearly, $\big(\widetilde{X}(t), Y(t)\big)$ is a finite-velocity random motion with orthogonal directions and initial position $(0,y)$. Denoting by $\tau$ and $\widetilde{\tau}$ the exit times of $\big(X(t), Y(t)\big)$ and  $\big(\widetilde{X}(t), Y(t)\big)$ from the strip $\Gamma$ respectively, it is immediate to verify that $\tau=\widetilde{\tau}$. This implies that, for $j=0,1,2,3$,
\begin{align}
p_j(x,y)=&\mathbb{P}\Big(Y(\tau)=0\;\Big\lvert X(0)=x,\,Y(0)=y,\,D(0)=d_j\Big)\nonumber\\
=&\mathbb{P}\Big(Y(\tau)=0\;\Big\lvert \widetilde{X}(0)=0,\,Y(0)=y,\,D(0)=d_j\Big)\nonumber\\
=&\mathbb{P}\Big(Y(\widetilde{\tau})=0\;\Big\lvert \widetilde{X}(0)=0,\,Y(0)=y,\,D(0)=d_j\Big)\nonumber\\
=&p_j(0,y)\label{academi}
\end{align}
Since the relationship (\ref{academi}) holds for all $x\in\mathbb{R}$, the function $p_j(x,y)$ must be independent of $x$. We now observe that, since the probabilities $p_j(x,y),\;j=0,1,2,3,$ do not depend on $x$, we have that $\frac{\partial p_0}{\partial x}=\frac{\partial p_2}{\partial x}=0$. Thus, the first and the third equations of the system (\ref{bivpdsys}) imply that \begin{equation}\label{iside0}p_0(x,y)=p_2(x,y)=\frac{p_1(x,y)+p_3(x,y)}{2}.\end{equation} The second and fourth equations yield
\begin{equation}\label{reducedbivpdsys}
\begin{cases}
\dfrac{\mathrm{d} p_1}{\mathrm{d} y}=\dfrac{\lambda}{2c}\,\left[p_1-p_3\right]\\[0.75em]
\dfrac{\mathrm{d} p_3}{\mathrm{d} y}=\dfrac{\lambda}{2c}\,\left[p_1-p_3\right].
\end{cases}\end{equation}
Hence, we have reduced the fourth-order system (\ref{bivpdsys}) into the second-order system (\ref{reducedbivpdsys}). Moreover, it can be verified that the system (\ref{reducedbivpdsys}) is subject to the boundary conditions
\begin{equation}p_1(x,L)=0\quad\text{and}\quad p_3(x,0)=1,\qquad \forall x\in\mathbb{R}.\label{IMER3}\end{equation}
Thus, we are able to prove the following result.
\begin{thm}\label{thm:p}It holds that
\begin{equation}p_1(x,y)=\frac{\lambda\,(L-y)}{\lambda L+2c},\qquad p_3(x,y)=\frac{2c+\lambda\,(L-y)}{\lambda L+2c}\label{iside1}\end{equation}
and
\begin{equation}p(x,y)=p_0(x,y)=p_2(x,y)=\frac{c+\lambda\,(L-y)}{\lambda L+2c}.\label{iside2}\end{equation}\end{thm}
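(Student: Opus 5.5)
We will solve the reduced system (\ref{reducedbivpdsys}) with the boundary conditions (\ref{IMER3}), exactly as in Theorem \ref{thm1}, and then recover $p_0,p_2$ and $p$ from the identity (\ref{iside0}). Since the $p_j$ do not depend on $x$ by (\ref{academi}), the problem is purely one-dimensional in $y\in[0,L]$.

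\textbf{Step 1: general solution.} Subtracting the two equations of (\ref{reducedbivpdsys}) gives $\frac{\mathrm{d}}{\mathrm{d}y}(p_1-p_3)=0$, so $p_1-p_3\equiv -k$ for a constant $k$. Equivalently, the coefficient matrix $\frac{\lambda}{2c}\begin{pmatrix}1&-1\\1&-1\end{pmatrix}$ is nilpotent, and the exponential is $I+Ay$ as in Theorem \ref{thm1}. Substituting back, both derivatives equal $-\frac{\lambda k}{2c}$, so
$$p_1(y)=m-\frac{\lambda k}{2c}\,y,\qquad p_3(y)=m+k-\frac{\lambda k}{2c}\,y,$$
for constants $m,k$.

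\textbf{Step 2: boundary conditions.} Imposing $p_1(L)=0$ gives $m=\frac{\lambda k L}{2c}$. Imposing $p_3(0)=1$ gives $m+k=1$. Solving these two linear equations yields
$$k=\frac{2c}{\lambda L+2c},\qquad m=\frac{\lambda L}{\lambda L+2c},$$
which are exactly the formulas (\ref{iside1}).

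\textbf{Step 3: remaining probabilities.} By (\ref{iside0}), $p_0=p_2=\frac{p_1+p_3}{2}=\frac{c+\lambda(L-y)}{\lambda L+2c}$. The unconditional probability is $p=\frac14(p_0+p_1+p_2+p_3)=\frac14\big(2p_0+2p_0\big)=p_0$, which gives (\ref{iside2}).

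\textbf{Main obstacle.} The computation itself is routine. The delicate point is justifying that (\ref{iside0}) and the boundary conditions (\ref{IMER3}) are legitimate, namely that the $p_j$ are differentiable and independent of $x$. The latter is already settled by (\ref{academi}). The former is taken from the derivation of (\ref{bivpdsys}) in the same way as in the earlier sections.

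As a sanity check, in the hydrodynamic limit $\lambda,c\to+\infty$ with $\lambda/c^2\to1$ (so $c/\lambda\to0$), the formula for $p$ converges to $(L-y)/L$, the Brownian exit probability through the lower boundary.
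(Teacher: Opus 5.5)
Your proof is correct and follows essentially the same route as the paper: solve the reduced nilpotent system (your subtraction of the two equations is the componentwise form of $e^{Ay}=I+Ay$), impose $p_1(L)=0$ and $p_3(0)=1$, and recover $p_0=p_2=p$ from (\ref{iside0}). Your constants match the paper's, with $m=\kappa_0=\frac{\lambda L}{\lambda L+2c}$ and $m+k=\kappa_1=1$.
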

\begin{proof}
The system (\ref{reducedbivpdsys}) can be expressed as
\begin{equation}\frac{\mathrm{d}}{\mathrm{d} y}\begin{pmatrix}p_1(x,y)\\p_3(x,y)\end{pmatrix}=A\cdot\begin{pmatrix}p_1(x,y)\\p_3(x,y)\end{pmatrix}\label{matrixformp}\end{equation} with $A=\begin{pmatrix}\frac{\lambda}{2c} & -\frac{\lambda}{2c}\\[0.5em]\frac{\lambda}{2c} & -\frac{\lambda}{2c}\end{pmatrix}.$
Since the system (\ref{matrixformp}) has the same form as (\ref{matrixformu}) with $c$ replaced by $2c$, the solution can be achieved as in Theorem \ref{thm1}. Thus, we can write that
$$p_1(x,y)=\kappa_0+(\kappa_0-\kappa_1)\frac{\lambda y}{2c},\qquad p_3(x,y)=\kappa_1+(\kappa_0-\kappa_1)\frac{\lambda y}{2c}$$ where $\kappa_0$ and $\kappa_1$ are arbitrary constants. By imposing the boundary conditions (\ref{IMER3}) we obtain that
$$\kappa_0=\frac{\lambda L}{\lambda L+2c},\qquad\qquad\kappa_1=1$$ which yields formula (\ref{iside1}). Equation (\ref{iside2}) finally follows by the identity (\ref{iside0}).
\end{proof}
Theorem \ref{thm:p} provides an explicit representation of the exit probabilities (\ref{nikol}). These probabilities depend linearly on the ordinate $y$ of the starting point of the process $\big(X(t),Y(t)\big)$. This fact is noteworthy, since an equivalent representation of the probabilities $p_j(x,y),\;j=0,1,2,3$, is given by the integral formula (\ref{doubleintegral}). In particular, we recall that formula (\ref{doubleintegral}) involves double integrals, since the integrand functions themselves admit the integral representations described in (\ref{inegrep}) and the subsequent formulas. To verify the consistency between the linear expressions derived in Theorem \ref{thm:p} and the integral representation (\ref{doubleintegral}), we carry out numerical experiments by evaluating (\ref{doubleintegral}) via numerical integration. The results of the numerical experiments are illustrated in Figure \ref{fig:intdens} and confirm the consistency of the results presented so far.

\begin{figure}[h!]\label{fig:intdens}
\centering
\includegraphics[scale=0.7]{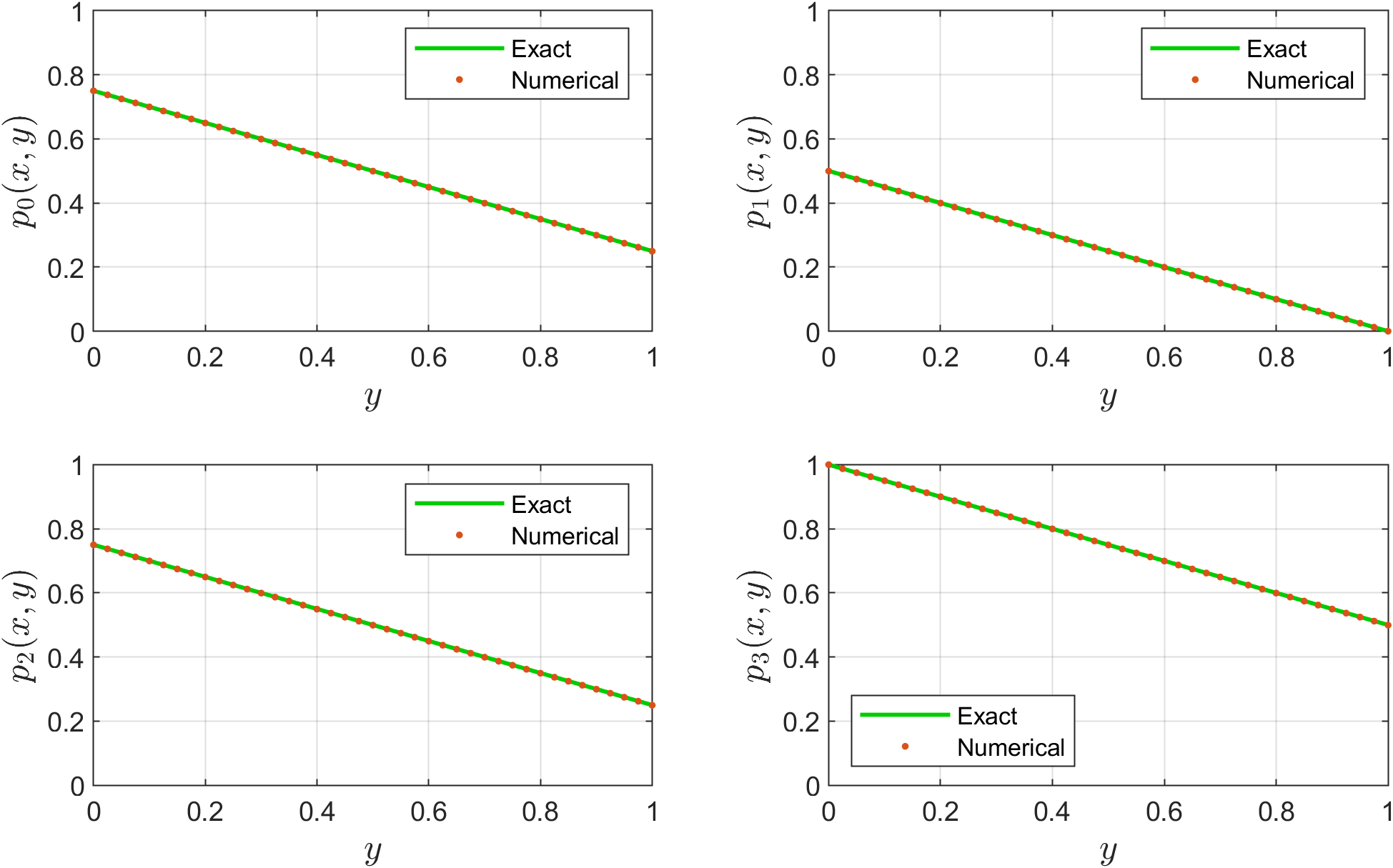}
\caption{Comparison of the exact probabilities of exiting the strip $\Gamma$ through the lower boundary and the corresponding numerical probabilities obtained by computing the double integrals (\ref{doubleintegral}) for different values of the initial abscissa $x$. For the computation of the double integrals, the inner integrals were calculated by using adaptive quadrature algorithms, while the outer integrals were computed via the trapezoidal rule. All the probabilities were obtained for $L=1$, $\lambda=10$ and $c=5$.}
\end{figure}

We conclude our study of the process $\big(X(t),Y(t)\big)$ in the infinite strip $\Gamma$ by investigating the mean exit time from the strip as a function of the starting point $(x,y)$. Thus, for $j=0,1,2,3$, we consider the functions
\begin{equation}\label{bbhb}h_j(x,y)=\mathbb{E}\Big[\tau\;\Big\lvert\;X(0)=x,\,Y(0)=y,\,D(0)=d_j\Big],\qquad (x,y)\in\Gamma.\end{equation}
Similarly, by removing the conditioning with respect to the initial direction, we define the function
$$h(x,y)=\mathbb{E}\Big[\tau\;\Big\lvert\;X(0)=x,\,Y(0)=y\Big],\qquad (x,y)\in\Gamma.$$
Clearly, we have that 
\begin{equation}h(x,y)=\frac{1}{4}\sum_{j=0}^3 h_j(x,y).\label{unchg}\end{equation}
By using the methods described in Section \ref{sec:base}, it can be verified that the expected values (\ref{bbhb}) satisfy the following system of equations
\begin{equation}\label{IMErcod}
\begin{cases}
h_0(x,y)=\Delta t+h_0(x+c\,\Delta t,y)\,(1-\lambda\,\Delta t)+\big[h_1(x,y)+h_3(x,y)\big]\,\frac{\lambda\,\Delta t}{2} + o(\Delta t)\\[0.35em]
h_1(x,y)=\Delta t+h_1(x,y+c\,\Delta t)\,(1-\lambda\,\Delta t)+\big[h_0(x,y)+h_2(x,y)\big]\,\frac{\lambda\,\Delta t}{2} + o(\Delta t)\\[0.35em]
h_2(x,y)=\Delta t+h_2(x-c\,\Delta t,y)\,(1-\lambda\,\Delta t)+\big[h_1(x,y)+h_3(x,y)\big]\,\frac{\lambda\,\Delta t}{2} + o(\Delta t)\\[0.35em]
h_3(x,y)=\Delta t+h_3(x,y-c\,\Delta t)\,(1-\lambda\,\Delta t)+\big[h_0(x,y)+h_2(x,y)\big]\,\frac{\lambda\,\Delta t}{2} + o(\Delta t).
\end{cases}\end{equation}
A first-order expansion of the system (\ref{IMErcod}) leads to the linear system of partial differential equations
\begin{equation}\label{bivhjdsys}
\begin{cases}
\dfrac{\partial h_0}{\partial x}=-\dfrac{\lambda}{2c}\,\left[h_1+h_3-2h_0\right]-\dfrac{1}{c}\\[0.75em]
\dfrac{\partial h_1}{\partial y}=-\dfrac{\lambda}{2c}\,\left[h_0+h_2-2h_1\right]-\dfrac{1}{c}\\[0.75em]
\dfrac{\partial h_2}{\partial x}=\dfrac{\lambda}{2c}\,\left[h_1+h_3-2h_2\right]+\dfrac{1}{c}\\[0.75em]
\dfrac{\partial h_3}{\partial y}=\dfrac{\lambda}{2c}\,\left[h_0+h_2-2h_3\right]+\dfrac{1}{c}.
\end{cases}\end{equation}
It is interesting to observe that the system (\ref{bivhjdsys}) implies that the function $h(x,y)$ satisfies the partial differential equation
\begin{equation}\label{hpoispde}\Bigg(\Delta-\frac{c^2}{\lambda^2}\,\frac{\partial^4}{\partial x^2\,\partial y^2}\Bigg)h=-\frac{2\lambda}{c^2}\end{equation}
where $\Delta$ is the bivariate Laplace operator. Clearly, equation (\ref{hpoispde}) is also satisfied by the functions $h_j(x,y),\;j=0,1,2,3$. Moreover, we emphasize that equation (\ref{hpoispde}) represents a finite-velocity extension of the classical bivariate Poisson equation arising in the study of Brownian motion. Indeed, by taking the hydrodynamic limit for $\lambda,c\to+\infty$ with $\frac{\lambda}{c^2}\to1$, equation (\ref{hpoispde}) reduces to the classical Poisson equation $$\Delta h = -2$$ which governs the mean exit time of a bivariate Brownian motion. This is consistent with the limiting distribution of the finite-velocity process $\big(X(t),Y(t)\big)$, which converges in distribution to a Brownian motion in the hydrodynamic limit. Similarly to the finite-velocity Laplace equation (\ref{extendedlaplacian}), the extended Poisson equation (\ref{hpoispde}) is independent of the geometry of the set in which the mean exit time is studied. On the other hand, the solution $h(x,y)$ depends on the shape of the considered set $\Gamma$.\\
In the special case in which $\Gamma$ is the horizontal infinite strip, the problem undergoes a remarkable simplification. In particular, similarly to the exit probability (\ref{nikol}), it can be verified that the mean exit time (\ref{bbhb}) from the infinite strip does not depend on the abscissa $x$ of the starting point. This can be verified by means of the same procedure described in equation (\ref{academi}). Therefore, the first and third differential equations of the system (\ref{bivhjdsys}) reduce to the algebraic equation
\begin{equation}\label{bivhjdsysalg}
h_0(x,y)=h_2(x,y)=\frac{h_1(x,y)+h_3(x,y)}{2}+\dfrac{1}{\lambda}.
\end{equation}
Moreover, the second and fourth equations of the system can be expressed as
\begin{equation}\label{bivhjdsysredu}
\begin{cases}
\dfrac{\mathrm{d} h_1}{\mathrm{d} y}=-\dfrac{\lambda}{2c}\,\left[h_3-h_1\right]-\dfrac{2}{c}\\[0.75em]
\dfrac{\mathrm{d} h_3}{\mathrm{d} y}=-\dfrac{\lambda}{2c}\,\left[h_3-h_1\right]+\dfrac{2}{c}.
\end{cases}\end{equation}
To solve the system (\ref{bivhjdsysredu}), we observe that the following boundary conditions must hold:
\begin{equation}\label{tashsi}h_1(x,L)=h_3(x,0)=0.\end{equation}
Thus, the mean exit time of $\big(X(t),Y(t)\big)$ from the horizontal strip $\Gamma$ is given in the following theorem.
\begin{thm}
For all $x\in\mathbb{R},\;y\in[0,L]$, it holds that
\begin{equation}\label{got1bbb}h_1(x,y)=\frac{\lambda\;y\,(L-y)}{c^2}+\frac{2\,(L-y)}{c}\end{equation}
and
\begin{equation}\label{got2bbb}h_3(x,y)=\frac{\lambda\;y\,(L-y)}{c^2}+\frac{2y}{c}.\end{equation}
Consequently, we have that
\begin{equation}\label{got3bbb}h_0(x,y)=h_2(x,y)=\frac{\lambda\;y\,(L-y)}{c^2}+\frac{\lambda L +c}{\lambda\,c}\end{equation}
and \begin{equation}\label{got4bbb}h(x,y)=\frac{\lambda\;y\,(L-y)}{c^2}+\frac{2\lambda L +c}{2\lambda\,c}.\end{equation}
\end{thm}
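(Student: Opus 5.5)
The plan mirrors the proof of Theorem \ref{thm:fring}. The reduced system (\ref{bivhjdsysredu}) has exactly the structure of (\ref{hjdsys}), with $c$ replaced by $2c$ in the coupling and forcing $\pm\frac{2}{c}=\pm\frac{1}{c/2}$. Rather than rely on a literal substitution, I would solve it directly.

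First, I would set $S=h_1+h_3$ and $D=h_3-h_1$. Adding the two equations of (\ref{bivhjdsysredu}) gives
$$S'=-\frac{\lambda}{c}D.$$
Subtracting them gives
$$D'=\frac{4}{c},$$
so $D(y)=\frac{4y}{c}+\kappa$ and
$$S(y)=-\frac{2\lambda}{c^2}y^2-\frac{\lambda\kappa}{c}y+\sigma,$$
with two constants $\kappa$ and $\sigma$. Equivalently, one can repeat the variation-of-constants formula (\ref{hgeneralsol}), using the nilpotent matrix $A$ of Theorem \ref{thm:p} (whose exponential is $I+Ay$).

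Next, I would impose the boundary conditions (\ref{tashsi}). The condition $h_3(0)=0$ gives $\sigma-\kappa=0$, and $h_1(L)=0$ gives $S(L)=D(L)$. Together these are two linear equations for $\kappa$ and $\sigma$; solving them should give $\sigma=\kappa=\frac{2L}{c}$.

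Then I would recover $h_1=\frac{S-D}{2}$ and $h_3=\frac{S+D}{2}$ and check that they match (\ref{got1bbb}) and (\ref{got2bbb}). As a consistency check, $h_3-h_1=\frac{4y-2L}{c}$ agrees with $D$. Formula (\ref{got3bbb}) then follows from (\ref{bivhjdsysalg}) by averaging $h_1$ and $h_3$ and adding $\frac{1}{\lambda}$, which gives the $\frac{L}{c}+\frac{1}{\lambda}$ term. Finally, (\ref{got4bbb}) follows from (\ref{unchg}): averaging the four functions gives $\frac{\lambda y(L-y)}{c^2}+\frac{L}{c}+\frac{1}{2\lambda}$, which equals the stated expression.

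The computation itself is routine. The main point needing care is that the reduced system and the algebraic relation (\ref{bivhjdsysalg}) rest on the $x$-independence of the $h_j$, which I take from the translation argument (\ref{academi}) as stated in the paper. A further check worth making is that $h_0$ and $h_2$ need no separate boundary condition, since horizontal motion cannot reach $\partial\Gamma$. I would also verify the sign bookkeeping in $D'$, as that is where an error would most likely occur.
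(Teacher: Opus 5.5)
Your decoupling is sound: $S'=-\frac{\lambda}{c}D$ and $D'=\frac{4}{c}$ are correct. It is a slightly more transparent variant of the paper's argument. The paper instead writes the general solution via $e^{Ay}=I+Ay$ and variation of constants, with $\kappa_0=h_1(0)$ and $\kappa_1=h_3(0)$. There $h_3(0)=0$ gives $\kappa_1=0$ at once, and $h_1(L)=0$ then gives $\kappa_0=2L/c$. The problem is your boundary-condition step, which as written does not produce the theorem.

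With your convention $D=h_3-h_1$ and $h_3=\frac{S+D}{2}$, the condition $h_3(0)=0$ reads $\sigma+\kappa=0$, not $\sigma-\kappa=0$. Combine it with $S(L)=D(L)$, i.e. $-\frac{2\lambda L^2}{c^2}-\frac{\lambda\kappa L}{c}+\sigma=\frac{4L}{c}+\kappa$. This gives $-\kappa\left(2+\frac{\lambda L}{c}\right)=\frac{2L}{c}\left(2+\frac{\lambda L}{c}\right)$, hence $\kappa=-\frac{2L}{c}$ and $\sigma=\frac{2L}{c}$. This matches $S(0)=h_1(0)+h_3(0)$ and $D(0)=h_3(0)-h_1(0)$ from the target formulas.

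The values $\sigma=\kappa=\frac{2L}{c}$ that you announce fail your own equations. For instance, they give $S(L)=\frac{2L}{c}-\frac{4\lambda L^2}{c^2}\neq\frac{6L}{c}=D(L)$. They also fail your consistency check, since $D=\frac{4y-2L}{c}$ forces $D(0)=\kappa=-\frac{2L}{c}$. Solving the system you actually wrote ($\sigma=\kappa$ together with $S(L)=D(L)$) gives $\sigma=\kappa=-\frac{4}{\lambda}-\frac{2L}{c}$, which violates $h_3(0)=0$.

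What happened is that you imported both the $y=0$ condition and the constants from the opposite convention $D=h_1-h_3$. Under that convention $h_3=\frac{S-D}{2}$, so $h_3(0)=0$ does mean $\sigma=\kappa$, and $\sigma=\kappa=\frac{2L}{c}$ is right. So the sign slip is in the condition at $y=0$, not in $D'$ as you anticipated.

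Once this is corrected, $h_1=\frac{S-D}{2}$ and $h_3=\frac{S+D}{2}$ give exactly (\ref{got1bbb}) and (\ref{got2bbb}). Your derivations of (\ref{got3bbb}) and (\ref{got4bbb}) are then correct as stated.
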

\begin{proof}
We start by observing that the system (\ref{bivhjdsysredu}) can be expressed in the form
\begin{equation}\frac{\mathrm{d}}{\mathrm{d} y}\begin{pmatrix}h_1(x,y)\\[0.3em]h_3(x,y)\end{pmatrix}=A\cdot\begin{pmatrix}h_1(x,y)\\[0.3em]h_3(x,y)\end{pmatrix}+\begin{pmatrix}-\frac{2}{c}\\[0.3em]\;\frac{2}{c}\end{pmatrix}\label{matrixformhbiv}\end{equation} with $A=\begin{pmatrix}\frac{\lambda}{2c} & -\frac{\lambda}{2c}\\[0.5em]\frac{\lambda}{2c} & -\frac{\lambda}{2c}\end{pmatrix}$. By using formula (\ref{hgeneralsol}) as in Theorem \ref{thm:fring}, the general solution to (\ref{matrixformhbiv}) reads
$$h_1(x,y)=\kappa_0+(\lambda\kappa_0-\lambda\kappa_1-4)\,\frac{y}{2c}-\frac{\lambda y^2}{c^2}$$
and
$$h_3(x,y)=\kappa_1+(\lambda\kappa_0-\lambda\kappa_1+4)\,\frac{y}{2c}-\frac{\lambda y^2}{c^2}.$$
By imposing the boundary conditions (\ref{tashsi}), we obtain that
$$\kappa_0=\frac{2L}{c},\qquad\qquad \kappa_1=0$$
which yield formulas (\ref{got1bbb}) and (\ref{got2bbb}). Formulas (\ref{got3bbb}) and (\ref{got4bbb}) follow immediately by using (\ref{unchg}) and (\ref{bivhjdsysalg}).
\end{proof}

\bibliographystyle{plain}
\nocite{*}
\bibliography{bibliography}

@article{cinqueasymm,
title = {On the exact distributions of the maximum of the asymmetric telegraph process},
journal = {Stochastic Processes and their Applications},
volume = {142},
pages = {601-633},
year = {2021},
issn = {0304-4149},
author = {Fabrizio Cinque and Enzo Orsingher}
}

@article{cinquemax,
  title={On the distribution of the maximum of the telegraph process},
  author={Cinque, Fabrizio and Orsingher, Enzo},
  journal={Theory of Probability and Mathematical Statistics},
  volume={102},
  pages={73--95},
  year={2020}
}

@book{port2012brownian,
  title={Brownian motion and classical potential theory},
  author={Port, Sidney and Stone, Charles},
  year={2012},
  publisher={Elsevier},
  address={New York}
}

@book{doob1984classical,
  title={Classical potential theory and its probabilistic counterpart},
  author={Doob, Joseph L and others},
  volume={19},
  year={1984},
  publisher={Springer}
}

@book{karatzas2014brownian,
  title={Brownian motion and stochastic calculus},
  author={Karatzas, Ioannis and Shreve, Steven},
  year={2014},
  publisher={Springer}
}

@article{pedicone,
  title={On the distribution of the telegraph meander and its properties},
  author={Pedicone, Andrea and Orsingher, Enzo},
  journal={Stochastic Processes and their Applications},
  pages={104887},
  year={2026},
}

@article{vortex,
  title={Planar random motions in a vortex},
  author={Orsingher, Enzo and Marchione, Manfred Marvin},
  journal={Journal of Theoretical Probability},
  volume={38},
  number={1},
  pages={4},
  year={2025},
}

@article{orsingher1990,
  title={Probability law, flow function, maximum distribution of wave-governed random motions and their connections with Kirchoff's laws},
  author={Orsingher, Enzo},
  journal={Stochastic Processes and their Applications},
  volume={34},
  number={1},
  pages={49--66},
  year={1990},
  publisher={Elsevier}
}

@article{bogachev,
  title={Occupation time distributions for the telegraph process},
  author={Bogachev, Leonid and Ratanov, Nikita},
  journal={Stochastic Processes and their Applications},
  volume={121},
  number={8},
  pages={1816--1844},
  year={2011}
}

@article{foong,
  title={Properties of the telegrapher's random process with or without a trap},
  author={Foong, SK and Kanno, S},
  journal={Stochastic processes and their applications},
  volume={53},
  number={1},
  pages={147--173},
  year={1994}
}

@article{Ratanov,
author = {Nikita Ratanov},
title = {A jump telegraph model for option pricing},
journal = {Quantitative Finance},
volume = {7},
number = {5},
pages = {575--583},
year = {2007}}

@article{dicrpellerey,
  title={On prices' evolutions based on geometric telegrapher's process},
  author={Di Crescenzo, Antonio and Pellerey, Franco},
  journal={Applied Stochastic Models in Business and Industry},
  volume={18},
  number={2},
  pages={171--184},
  year={2002}}

@article{dicrminimal,
  title={Exact transient analysis of a planar random motion with three directions},
  author={Di Crescenzo, Antonio},
  journal={Stochastics: An International Journal of Probability and Stochastic Processes},
  volume={72},
  number={3-4},
  pages={175--189},
  year={2002},
  publisher={Taylor \& Francis}
}

@article{orsingher2000exact,
  title={Exact joint distribution in a model of planar random motion},
  author={Orsingher, Enzo},
  journal={Stochastics: An International Journal of Probability and Stochastic Processes},
  volume={69},
  number={1-2},
  pages={1--10},
  year={2000}
}

@article{beghinnieddu,
  title={Probabilistic analysis of the telegrapher's process with drift by means of relativistic transformations},
  author={Beghin, Luisa and Nieddu, Luciano and Orsingher, Enzo},
  journal={International Journal of Stochastic Analysis},
  volume={14},
  number={1},
  pages={11--25},
  year={2001}
}

@article{goldstein1951,
  title={On diffusion by discontinuous movements, and on the telegraph equation},
  author={Goldstein, Sidney},
  journal={The Quarterly Journal of Mechanics and Applied Mathematics},
  volume={4},
  number={2},
  pages={129--156},
  year={1951}
}

@article{cane1975,
  title={Diffusion models with relativity effects},
  author={Cane, Violet R},
  journal={Journal of Applied Probability},
  volume={12},
  number={S1},
  pages={263--273},
  year={1975}
}

@article{kac1974stochastic,
  title={A stochastic model related to the telegrapher's equation},
  author={Kac, Mark},
  journal={The Rocky Mountain Journal of Mathematics},
  volume={4},
  number={3},
  pages={497--509},
  year={1974},
  publisher={JSTOR}
}

@article{MOcorr,
  title={On a planar random motion with asymptotically correlated components},
  author={Marchione, Manfred Marvin and Orsingher, Enzo},
  journal={Journal of Statistical Physics},
  volume={191},
  number={10},
  pages={121},
  year={2024}
}
\end{document}